\begin{document}

 \newtheorem{thm}{Theorem}[section]
 \newtheorem{cor}[thm]{Corollary}
 \newtheorem{lem}[thm]{Lemma}{\rm}
 \newtheorem{prop}[thm]{Proposition}

 \newtheorem{defn}[thm]{Definition}{\rm}
 \newtheorem{assumption}[thm]{Assumption}
 \newtheorem{rem}[thm]{Remark}
 \newtheorem{ex}{Example}\numberwithin{equation}{section}

\def\br{\bar{\rho}}
\def\la{\langle}
\def\C{\mathcal{C}}
\def\ra{\rangle}
\def\e{{\rm e}}
\def\x{\mathbf{x}}
\def\by{\mathbf{y}}
\def\bz{\mathbf{z}}
\def\F{\mathcal{F}}
\def\R{\mathbb{R}}
\def\T{\mathbf{T}}
\def\N{\mathbb{N}}
\def\K{\mathbf{K}}
\def\bK{\overline{\mathbf{K}}}
\def\Q{\mathbf{Q}}
\def\M{\mathbf{M}}
\def\O{\mathbf{O}}
\def\C{\mathbf{C}}
\def\P{\mathbf{P}}
\def\Z{\mathbf{Z}}
\def\H{\mathcal{H}}
\def\A{\mathbf{A}}
\def\V{\mathbf{V}}
\def\AA{\overline{\mathbf{A}}}
\def\B{\mathbf{B}}
\def\c{\mathbf{c}}
\def\L{\mathbf{L}}
\def\bS{\mathbf{S}}
\def\H{\mathcal{H}}
\def\I{\mathbf{I}}
\def\Y{\mathbf{Y}}
\def\X{\mathbf{X}}
\def\G{\mathbf{G}}
\def\f{\mathbf{f}}
\def\z{\mathbf{z}}
\def\y{\mathbf{y}}
\def\d{\hat{d}}
\def\bx{\mathbf{x}}
\def\y{\mathbf{y}}
\def\v{\mathbf{v}}
\def\g{\mathbf{g}}
\def\w{\mathbf{w}}
\def\b{\mathcal{B}}
\def\a{\mathbf{a}}
\def\b{\mathbf{b}}
\def\q{\mathbf{q}}
\def\u{\mathbf{u}}
\def\h{\mathbf{h}}
\def\s{\mathcal{S}}
\def\bs{\mathbf{s}}
\def\br{\mathbf{r}}
\def\cc{\mathcal{C}}
\def\co{{\rm co}\,}
\def\cop{{\rm COP}\,}
\def\tg{\tilde{f}}
\def\tx{\tilde{\x}}
\def\supmu{{\rm supp}\,\mu}
\def\supnu{{\rm supp}\,\nu}
\def\m{\mathcal{M}}
\def\s{\mathcal{S}}
\def\k{\mathcal{K}}
\def\la{\langle}
\def\ra{\rangle}
\def\psd{{\rm Psd}}

\title[nonnegativity]{Inverse polynomial optimization}
\author{Jean B. Lasserre}
\address{LAAS-CNRS and Institute of Mathematics\\
University of Toulouse\\
LAAS, 7 avenue du Colonel Roche\\
31077 Toulouse C\'edex 4,France}
\email{lasserre@laas.fr}
\date{}

\begin{abstract}
We consider the inverse optimization problem associated with
the polynomial program
$f^*=\min \{f(\x):\x\in\K\}$ and a given current feasible solution $\y\in\K$. 
We provide a systematic numerical scheme to compute an inverse
optimal solution. That is, we compute
a polynomial $\tilde{f}$ (which may be of same degree as $f$ if desired) 
with the following properties: %(a) $\tilde{f}(\y)=f(\y)$,  
(a) $\y$ is a global minimizer
of $\tilde{f}$ on $\K$ with a Putinar's certificate with 
an {\it a priori} degree bound $d$ fixed,
and (b), $\tilde{f}$ minimizes $\Vert f-\tilde{f}\Vert$ (which can be
the $\ell_1$, $\ell_2$ or $\ell_\infty$-norm of the coefficients)
over all polynomials with such properties. Computing $\tilde{f}_d$
reduces to solving a semidefinite program whose optimal value
also provides a bound on how far 
is $f(\y)$ from the unknown optimal value $f^*$. 
The size of the semidefinite program can be adapted to 
computational capabilities available.
Moreover, if one uses the $\ell_1$-norm,
then $\tilde{f}$ takes a simple and explicit {\it canonical} form.
Some variations are also discussed.
\end{abstract}

\keywords{Inverse optimization; positivity certificate; mathematical programming; global optimization; 
semidefinite programming}

\subjclass{90C26 47J07 65J22 49K30 90C22}

\maketitle

\section{Introduction}

Let $\P$ be the optimization problem $f^*= \min \:\{ f(\x)\::\:\x\in\K\:\}$,
where 
\begin{equation}
\label{setk}
\K:=\{\x\in\R^n\,:\, g_j(\x)\geq0,\: j=1,\ldots,m\},
\end{equation}
for some polynomials $f,(g_j)\subset\R[\x]$. This framework is rather general as it encompasses
a large class of important optimization problems, including non convex and 
discrete optimization problems.

Problem $\P$ is in general NP-hard and one is often satisfied with a local minimum
which can be obtained by running some local minimization algorithm among those available in the literature.
Typically in such algorithms, at a current iterate (i.e. some feasible solution $\y\in\K$), one checks whether
some optimality conditions (e.g. the Karush-Kuhn-Tucker (KKT) conditions) are satisfied within some
$\epsilon$-tolerance. However, as already mentioned those conditions are only valid
for a local minimum, and in fact, even only for a stationary point of the Lagrangian.
Moreover, in many practical situations
the criterion $f$ to minimize is subject to modeling errors
or is questionable. In such a situation, the practical meaning of a
local (or global) minimum $f^*$ (and local (or global) minimizer) also becomes 
questionable. It could well be that the current solution $\y$ is in fact a global
minimizer of an optimization problem $\P'$ with same feasible set as $\P$
but with a different criterion $\tilde{f}$. Therefore, if $\tilde{f}$
is close enough to $f$, one might not be willing to spend 
an enormous computing time and effort to find the global (or even local) minimum $f^*$
because one might be already satisfied with the current iterate $\y$ as a global minimizer of $\P'$.

{\bf Inverse Optimization} is precisely concerned with the above issue of determining
a criterion $\tilde{f}$ as close to $f$ as possible, and for which the current solution $\y$ is
an optimal solution of $\P'$ with this new criterion $\tilde{f}$.
Pioneering work in Control dates back to Freeman and Kokotovic
\cite{koko} for optimal stabilization. Whereas it was known that
every value function of an optimal stabilization problem is also a Lyapunov function for the closed-loop system,
in \cite{koko} the authors show the converse, that is, every Lyapunov
function for every stable closed-loop system is also a {\it value function} for a meaningful optimal stabilization problem. In optimization, pioneering works in this direction date back to Burton and Toint and \cite{burton} for shortest path problems, 
and Zhang and Liu \cite{zhang1,zhang2}, Huang and Liu \cite{huang}, and
Ahuja and Orlin and \cite{ahuja} for linear programs in the form
$\min \{\c'\x\,:\,\A\x\geq \b;\:\br \leq\x\leq \bs\}$ (and with the $\ell_1$-norm). For the latter, the inverse problem is again a linear program of the same form. Similar results also hold
for inverse linear programs with the $\ell_\infty$-norm as shown in Ahuja and Orlin \cite{ahuja} while
Zhang et al. \cite{zhangetal} provide a column generation method for the inverse shortest path problem.
In Heuberger \cite{heuberger} the interested reader
will find a nice survey on inverse optimization for linear programming and combinatorial optimization problems.
For integer programming, Schaefer \cite{schaefer} characterizes the feasible set of
cost vectors $c\in\R^n$ that are candidates for inverse optimality. It is the projection on $\R^n$ of 
a (lifted) convex polytope obtained from the super-additive dual of integer programs.
Unfortunately and as expected, the dimension of
of the lifted polyhedron (before projection) is exponential in the input size of the problem.
Finally, for linear programs Ahmed and Guan \cite{ahmed} have considered 
the variant called {\it inverse optimal value} problem 
in which one is interested in finding a linear criterion $c\in C\subset\R^n$ for which
the optimal value is the closest to a desired specified value. Perhaps surprisingly,
they proved that such a problem is NP-hard.\\

As the reader may immediately guess, in inverse optimization the main difficulty 
lies in having a tractable characterization of global optimality 
for a given current point $\y\in\K$ and some candidate criterion $\tilde{f}$. This is why 
most of all the above cited works address linear programs or 
combinatorial optimization problems for which some
characterization of global optimality is available and can be (sometimes) effectively
used for practical computation. For instance,
the characterization of global optimality for integer programs described in Schaefer \cite{schaefer} is
via the superadditive dual  of Wolsey \cite[\S 2]{wolsey} which is exponential in the 
problem size, and so prevents from its use in practice. 

This perhaps explains why inverse (non linear) optimization has not attracted much attention in the past, and
it  is a pity since inverse optimality could provide an alternative stopping criterion 
at a feasible solution $\y$ obtained by a (local) optimization algorithm.

The novelty of the present paper is to provide
a systematic numerical scheme for computing an inverse optimal solution associated with
the polynomial program $\P$ and a given feasible solution $\y\in\K$.
It consists of solving a semidefinite program\footnote{A semidefinite program is a convex (conic)
optimization problem that can be solved efficiently. For instance, up to arbitrary (fixed) precision and using some interior point algorithms,
it can be solved in time polynomial in the input size of the problem. For more details the interested reader is referred to
e.g. Wolkowicz et al. \cite{handbook} and the many references therein.}
 whose size can be adapted to
the problem on hand, and so  is tractable (at least for moderate size problems
and possibly for larger size problems if sparsity is taken into account).
Moreover, if one uses the $\ell_1$-norm
then the inverse-optimal objective function exhibits a simple and remarkable {\it canonical} (and sparse) form.

{\bf Contribution.} In this paper we investigate the inverse optimization problem for
polynomial optimization problems $\P$ as in (\ref{setk}), i.e., in a rather general context which
includes nonlinear and nonconvex optimization problems and  in particular,
0/1 and mixed integer nonlinear programs. Fortunately, in such a context,
Putinar's Positivstellensatz \cite{putinar} provides us with a very
powerful certificate of global optimality that can be adapted to
the actual computational capabilities for a given problem size.
More precisely, and assuming $\y=0$ (possibly after a change of variable $\x'=\x-\y$),
in the methodology that we propose,
one computes the coefficients of a polynomial
$\tilde{f}_d\in\R[\x]$ of same degree $d_0$ as $f$ (or possibly larger degree if desired and/or possibly with some
additional constraints), such that:
\begin{itemize}
\item %$\tilde{f}_d(\y)=f(\y)$, and 
$0$ is a global minimizer
of the related problem $\min_\x\{ \tilde{f}_d(\x)\,:\,\x\in\K\}$, with a Putinar's certificate of optimality with degree bound $d$ (to be explained later). 
\item $\tilde{f}_d$ minimizes $\Vert \tilde{f}-f\Vert_k$ (where depending on $k$, 
$\Vert \cdot\Vert_k$ is the $\ell_1$, $\ell_2$ or $\ell_\infty$-norm of the coefficient vector) over all polynomals $\tilde{f}$ of degree $d_0$, having the previous property.
\end{itemize}

Assuming $\K$ is compact (hence $\K\subseteq [-1,1]^n$ possibly after a change of variable),
it turns out that the optimal value $\rho_d:=\Vert \tilde{f}_d-f\Vert_k$ also measures how close is $f(0)$ to the global optimum
$f^*$ of $\P$, as we also obtain that $f^*\leq f(0)\leq f^*+\rho_d$ if $k=1$ and similarly
$f^*\leq f(0)\leq f^*+\rho_d\,{n+d_0\choose n}$ if $k=\infty$. 

In addition, for the $\ell_1$-norm we prove that 
$\tilde{f}_d$ has a simple {\it canonical form}, namely 
\[\tilde{f}_d\,=\,f+\b'\x+\sum_{i=1}^n\lambda_i\,x_i^2,\]
for some vector $\b\in\R^n$, and nonnegative vector $\lambda\in\R^n$, optimal solutions of a semidefinite program.
(For 0/1 problems it further simplifies to $\tilde{f}_d=f+\b'\x$ only.)
This canonical form is sparse as $\tilde{f}_d$ differs from $f$ in at most $2n$ entries only ($\ll {n+d_0\choose n}$). It illustrates
the sparsity properties of optimal solutions of $\ell_1$-norm minimization problems, already observed in other contexts
(e.g., in some compressed sensing applications).

Importantly, to compute $\tilde{f}_d$, one has to solve a semidefinite program of size parametrized by $d$,
where $d$ is chosen so that the size of semidefinite program associated with Putinar's certificate 
(with degree bound $d$) is compatible with current semidefinite solvers available. (Of course,
even if $d$ is relatively small, one is still restricted to problems of relatively modest size.)
Moreover, when $\K$ is compact, generically 
$\tilde{f}_d$ is an optimal solution of the ``ideal inverse optimization problem" 
provided that $d$ is sufficiently large!

In addition, one may also consider several additional options: 

$\bullet$ Instead of looking for a polynomial $\tilde{f}$ of same degree as $f$,
one might allow polynomials of higher degree, and/or restrict 
certain coefficients of $\tilde{f}$ to be the same as those of $f$ (e.g. for 
structural modeling reasons).

$\bullet$ One may restrict $\tilde{f}$ to a certain class of functions, e.g.,
quadratic polynomials and even convex quadratic polynomials.
In the latter important case and if the $g_j$'s that define $\K$ are concave,
the procedure to compute an optimal solution
$\tilde{f}(\x)=\tilde{\b}'\x+\x'\tilde{\Q}\x$ simplifies and reduces to solving separately a linear program 
(for computing $\tilde{\b}$) and a semidefinite program (for computing $\tilde{\Q}$).

The paper is organized as follows. In a first introductory section we present the notation, definitions, and
the ideal inverse optimization problem. We then describe how a practical inverse optimization problem
reduces to solving a semidefinite program and exhibit the canonical form of the optimal solution for the $\ell_1$-norm.
We also provide additional results, e.g., an asymptotic analysis when the degree bound in Putinar's certificate 
increases and also the particular case where one searches for a convex candidate criterion.

\section{Notation, definitions and preliminaries}

\subsection{Notation and definitions}

Let $\R[\x]$ (resp. $\R[\x]_d$) denote the ring of real polynomials in the variables
$\x=(x_1,\ldots,x_n)$ (resp. polynomials of degree at most $d$), whereas $\Sigma[\x]$ (resp. $\Sigma[\x]_d$) denotes 
its subset of sums of squares (s.o.s.) polynomials (resp. of s.o.s. of degree at most $2d$).
For every
$\alpha\in\N^n$ the notation $\x^\alpha$ stands for the monomial $x_1^{\alpha_1}\cdots x_n^{\alpha_n}$ and for every $i\in\N$, let $\N^{p}_d:=\{\beta\in\N^n:\sum_j\beta_j\leq d\}$ whose cardinal is $s(d)={n+d\choose n}$.
A polynomial $f\in\R[\x]$ is written 
\[\x\mapsto f(\x)\,=\,\sum_{\alpha\in\N^n}\,f_\alpha\,\x^\alpha,\]
and $f$ can be identified with its vector of coefficients $\f=(f_\alpha)$ in the canonical basis $(\x^\alpha)$, $\alpha\in\N^n$. Denote by $\mathcal{S}^t\subset\R^{t\times t}$ the space of real symmetric matrices,
and for any $\A\in\mathcal{S}^t$ the notation $\A\succeq0$ stands for $\A$ is positive semidefinite.
For $f\in\R[\x]_d$, let
\[\Vert f\Vert_k\,=\,\left\{\begin{array}{ll}
\displaystyle\sum_{\alpha\in\N^n_{d}}\vert f_\alpha\vert&\mbox{if $k=1$,}\\
\displaystyle\sum_{\alpha\in\N^n_{d}} f_\alpha^2&\mbox{ if $k=2$,}\\
\displaystyle\max\,\{\vert f_\alpha\vert\,:\:\alpha\in\N^n_{d}\}&\mbox{ if $k=\infty$.}\end{array}\right.\]
A real sequence $\z=(z_\alpha)$, $\alpha\in\N^n$, has a {\it representing measure} if
there exists some finite Borel measure $\mu$ on $\R^n$ such that 
\[z_\alpha\,=\,\int_{\R^n}\x^\alpha\,d\mu(\x),\qquad\forall\,\alpha\in\N^n.\]

Given a real sequence $\z=(z_\alpha)$ define the linear functional $L_\z:\R[\x]\to\R$ by:
\[f\:(=\sum_\alpha f_\alpha\x^\alpha)\quad\mapsto L_\z(f)\,=\,\sum_{\alpha}f_\alpha\,z_\alpha,\qquad f\in\R[\x].\]
\subsection*{Moment matrix}
The {\it moment} matrix associated with a sequence
$\z=(z_\alpha)$, $\alpha\in\N^n$, is the real symmetric matrix $\M_d(\z)$ with rows and columns indexed by $\N^n_d$, and whose entry $(\alpha,\beta)$ is just $z_{\alpha+\beta}$, for every $\alpha,\beta\in\N^n_d$. 
Alternatively, let
$\v_d(\x)\in\R^{s(d)}$ be the vector $(\x^\alpha)$, $\alpha\in\N^n_d$, and
define the matrices $(\B_\alpha)\subset\s^{s(d)}$ by
\begin{equation}
\label{balpha}
\v_d(\x)\,\v_d(\x)^T\,=\,\sum_{\alpha\in\N^n_{2d}}\B_\alpha\,\x^\alpha,\qquad\forall\x\in\R^n.\end{equation}
Then $\M_d(\z)=\sum_{\alpha\in\N^n_{2d}}z_\alpha\,\B_\alpha$.

If $\z$ has a representing measure $\mu$ then
$\M_d(\z)\succeq0$ because
\[\langle\f,\M_d(\z)\f\rangle\,=\,\int f^2\,d\mu\,\geq0,\qquad\forall \,\f\,\in\R^{s(d)}.\]

\subsection*{Localizing matrix}
With $\z$ as above and $g\in\R[\x]$ (with $g(\x)=\sum_\gamma g_\gamma\x^\gamma$), the {\it localizing} matrix associated with $\z$ 
and $g$ is the real symmetric matrix $\M_d(g\,\z)$ with rows and columns indexed by $\N^n_d$, and whose entry $(\alpha,\beta)$ is just $\sum_{\gamma}g_\gamma z_{\alpha+\beta+\gamma}$, for every $\alpha,\beta\in\N^n_d$.
Alternatively, let $\C_\alpha\in\s^{s(d)}$ be defined by:
\begin{equation}
\label{calpha}
g(\x)\,\v_d(\x)\,\v_d(\x)^T\,=\,\sum_{\alpha\in\N^n_{2d+{\rm deg}\,g}}\C_\alpha\,\x^\alpha,\qquad\forall\x\in\R^n.\end{equation}
Then $\M_d(g\,\z)=\sum_{\alpha\in\N^n_{2d+{\rm deg}g}}z_\alpha\,\C_\alpha$.

If $\z$ has a representing measure $\mu$ whose support is 
contained in the set $\{\x\,:\,g(\x)\geq0\}$ then
$\M_d(g\,\z)\succeq0$ because
\[\langle\f,\M_d(g\,\z)\f\rangle\,=\,\int f^2\,g\,d\mu\,\geq0,\qquad\forall \,\f\,\in\R^{s(d)}.\]

With $\K$ as in (\ref{setk}), let $g_0\in\R[\x]$ be the constant polynomial $\x\mapsto g_0(\x)=1$, and 
for every $j=0,1,\ldots,m$, let $v_j:=\lceil ({\rm deg}\,g_j)/2\rceil$.
\begin{defn}
With $d,k\in\N$ and $\K$ as in (\ref{setk}),  let $Q_k(g)\subset\R[\x]$ and $Q^d_k\subset\R[\x]_d$ be the convex cones:
\begin{eqnarray}
\label{put-suff?}
Q(g)&:=&\left\{\:\sum_{k=0}^m \sigma_j\,g_j\::\: \sigma_j\in\Sigma[\x]\quad j=1,\ldots,m\:\right\}.\\
\label{put-suff}
Q_k(g)&:=&\left\{\:\sum_{k=0}^m \sigma_j\,g_j\::\: \sigma_j\in\Sigma[\x]_{k-v_j},\quad j=1,\ldots,m\:\right\}.\\
\label{puT-suff1}
Q_k^d(g)&:=&Q_k(g)\,\cap\,\R[\x]_d
\end{eqnarray}
We say that every element $h\in Q_k(g)$ has a {\it Putinar's certificate} of nonnegativity on $\K$, with degree  bound $k$.
\end{defn}
The cone $Q(g)$ is called the quadratic module associated with the $g_j$'s.
Obviously, if $h\in Q(g)$ the associated s.o.s. polynomials $\sigma_j$'s provide a certificate of nonnegativity of $h$ on $\K$.
The cone $Q(g)$ is said to be {\it Archimedean} if and only if
\begin{equation}
\label{archimedean}
\x\mapsto M-\Vert\x\Vert^2\,\in\,Q(g)\quad\mbox{for some $M>0$.}
\end{equation}
Let $\psd_d(\K)\subset\R[\x]_d$ be the convex cone of polynomials of degree at most $d$, nonnegative on $\K$.
The name ``Putinar's certificate" is coming from the following Putinar's Positivstellensatz.
\begin{thm}[Putinar's Positivstellensatz \cite{putinar}]
\label{th-put}
Let $\K$ be as in (\ref{setk}) and assume that $Q(g)$ is Archimedean.
Then every polynomial $f\in\R[\x]$ strictly positive on $\K$ belongs to $Q(g)$. In addition,
\begin{equation}
\label{limit}
{\rm cl}\,\left(\bigcup_{k=0}^\infty Q^d_k(g)\right)\,=\,\psd_d(\K),\qquad \forall d\in\N.
\end{equation}
\end{thm}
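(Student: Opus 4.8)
The plan is to treat the displayed identity \eqref{limit} as a short corollary of the first assertion and to concentrate all the work on showing that every $f\in\R[\x]$ with $f>0$ on $\K$ lies in $Q(g)$. For the latter I would argue by duality: suppose, for contradiction, that $f>0$ on $\K$ but $f\notin Q(g)$, produce a linear functional that is nonnegative on $Q(g)$ yet nonpositive at $f$, and then show that this functional is integration against a measure supported in $\K$ — which is impossible.

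First I would extract from the Archimedean hypothesis \eqref{archimedean} the feature that makes the separation work, namely that the constant $1$ is an order unit of the convex cone $Q(g)$. From $N-\Vert\x\Vert^2\in Q(g)$ one gets $N-x_i^2\in Q(g)$ for each $i$, since $N-x_i^2=(N-\Vert\x\Vert^2)+\sum_{l\neq i}x_l^2$ and the second summand is a sum of squares, hence in $Q(g)$; the standard fact that an Archimedean quadratic module has $1$ as an order unit then yields, for every $p\in\R[\x]$, a scalar $\lambda>0$ with $\lambda\pm p\in Q(g)$. Thus $Q(g)$ is a proper cone with $1$ in its algebraic interior, and a Hahn--Banach separation applied to the point $f\notin Q(g)$ furnishes a nonzero linear functional $L:\R[\x]\to\R$ with $L\geq0$ on $Q(g)$ and $L(f)\leq0$; normalizing, $L(1)=1$.

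The heart of the argument — and the step I expect to be the \emph{main obstacle} — is to represent $L$ by a measure. Since every square lies in $Q(g)$, the bilinear form $(p,q)\mapsto L(pq)$ is positive semidefinite, so after quotienting by its kernel and completing I obtain a Hilbert space $H$ on which the multiplication operators $\hat{x}_i:p\mapsto x_ip$ act. Because $(N-x_i^2)p^2=\sum_j(\sigma_j p^2)g_j\in Q(g)$, we have $L((N-x_i^2)p^2)\geq0$, i.e. $\Vert\hat{x}_ip\Vert^2=L(x_i^2p^2)\leq N\,L(p^2)=N\Vert p\Vert^2$, so each $\hat{x}_i$ is a bounded symmetric, hence self-adjoint, operator, and the $\hat{x}_i$ commute. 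The spectral theorem for a commuting family of bounded self-adjoint operators then yields a joint spectral measure $E$ on $\R^n$, and setting $\mu(\cdot):=\langle\hat{1},E(\cdot)\hat{1}\rangle$ gives $L(p)=\int p\,d\mu$ for all $p$, with $\mu$ a finite positive measure of mass $L(1)=1$. Finally, for each $j$ the inequality $L(g_jp^2)\geq0$ (valid since $g_jp^2\in Q(g)$) says $\langle\hat{p},g_j(\hat{x})\hat{p}\rangle\geq0$, so $g_j(\hat{x})\succeq0$ and therefore $E$, and hence $\mu$, is supported in $\{g_j\geq0\}$; intersecting over $j$ gives $\operatorname{supp}\mu\subseteq\K$.

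This closes the first assertion: $L(f)=\int f\,d\mu>0$ because $f>0$ on $\K\supseteq\operatorname{supp}\mu$ and $\mu$ is a nonzero positive measure, contradicting $L(f)\leq0$; hence $f\in Q(g)$. For \eqref{limit} the inclusion $\subseteq$ is immediate, since each $Q^d_k(g)\subseteq\psd_d(\K)$ (its elements are sums $\sum_j\sigma_jg_j$, nonnegative on $\K$) and $\psd_d(\K)$ is closed in the finite-dimensional space $\R[\x]_d$. For $\supseteq$, take $f\in\psd_d(\K)$; then $f+\epsilon>0$ on $\K$ for every $\epsilon>0$, so by the first part $f+\epsilon\in Q(g)=\bigcup_k Q_k(g)$, and since $\deg(f+\epsilon)\leq d$ this places $f+\epsilon\in Q_k(g)\cap\R[\x]_d=Q^d_k(g)$ for some $k$. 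Letting $\epsilon\downarrow0$ gives $f\in{\rm cl}(\bigcup_k Q^d_k(g))$, completing the proof.
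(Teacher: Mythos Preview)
Your proposal is correct. For the closure identity \eqref{limit}, your argument is essentially the paper's: one perturbs $f\in\psd_d(\K)$ to $f+\epsilon$ (the paper writes $f+1/n$), invokes the first assertion to place the perturbed polynomial in some $Q^d_k(g)$, and lets $\epsilon\to0$. The difference lies entirely in the first assertion. The paper does not prove it at all; it simply records it as Putinar's result and cites \cite{putinar}. You instead supply a full proof along the classical functional-analytic route: Hahn--Banach separation using that $1$ is an order unit of the Archimedean quadratic module, then the GNS construction and the spectral theorem for the bounded commuting multiplication operators $\hat{x}_i$ to realize the separating functional as integration against a probability measure supported in $\K$. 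This is one of the standard proofs of Putinar's Positivstellensatz and is sound as you have sketched it. What you gain is self-containment; what the paper gains by merely citing is brevity, since the Positivstellensatz is used throughout only as a black box and its internal mechanism plays no further role.
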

The first statement is just Putinar's Positivstellensatz \cite{putinar} whereas the second statement
is an easy consequence. Indeed let $f\in\psd_d(\K)$. If $f>0$ on $\K$ then 
$f\in Q^d_k(g)$ for some $k$. If $f(\x)=0$ for some $\x\in\K$, let $f_n:=f+1/n$, so that
$f_n>0$ on $\K$ for every $n\in\N$. But then $f_n\in\cup_{k=0}^\infty Q^d_k(g)$ and 
the result follows because $\Vert f_n-f\Vert_1\to0$ as $n\to\infty$.

In fact, by results from Marshall \cite{marshall2} and more recently Nie \cite{nie},
membership in $Q(g)$ is also {\it generic} for polynomials that are only nonnegative on $\K$.
And so Putinar's Positivstellensatz is particularly useful to {\it certify} and enforce that 
a polynomial is nonnegative on $\K$, and in particular the polynomial
 $\x\mapsto f(\x)-f(\y)$ for the inverse optimization problem associated with a feasible solution $\y\in\K$.
 
 Notice that one may also be less demanding and ask $\y$ to be only a global $\epsilon$-minimizer
 for some fixed $\epsilon>0$.  Again Putinar's Positivstellensatz is exactly what we need
 to {\it certify} (global) $\epsilon$-optimality by requiring $f(\cdot)-f(\y)+\epsilon\in Q_k(g)$.
 
\subsection{The ideal inverse problem}

Let $\P$ be the global optimization problem
$f^*=\min_\x\{f(\x)\,:\,\x\in\K\}$ with $\K\subset\R^n$ as in (\ref{setk}),
and $f\in\R[\x]_{d_0}$ where $d_0:={\rm deg}\,f$.

Identifying a polynomial $f\in\R[\x]_{d_0}$ with its vector of coefficients
$\f=(f_\alpha)\in\R^{s(d_0)}$, one may and will identify $\R[\x]_{d_0}$ with the vector space 
$\R^{s(d_0)}$, i.e., $\R[\x]_{d_0}\ni f\leftrightarrow \f\in\R^{s(d_0)}$. Similarly,
the convex cone $\psd_{d_0}(\K)\subset\R[\x]_{d_0}$ can be identified with the convex cone 
$\{\h\in\R^{s(d_0)}: \h\leftrightarrow h\in\psd_{d_0}(\K)\}$ of $\R^{s(d_0)}$. So in the sequel, and unless if necessary, we will not distinguish between $f$ and $\f$.

Next, let $\y\in\K$ and $k\in\{1,2,\infty\}$ both fixed, and consider the following optimization problem $\mathcal{P}$
\begin{equation}
\label{inv-0}
\mathcal{P}:\quad\rho^k=
\displaystyle\min_{\tilde{f}\in\R[\x]_{d_0}}\:\{\,\Vert \tilde{f}-f\Vert_k\::\:\x\mapsto \tilde{f}(\x)-\tilde{f}(\y)\,\in\,\psd_{d_0}(\K)\:\}.
\end{equation}

\begin{thm}
\label{th-ideal}
Let $\K\subset\R^n$ be with nonempty interior.
Problem (\ref{inv-0}) has an optimal solution $\tilde{f}^*\in\R[\x]_{d_0}$. In addition,
$\rho^k=0$ if and only if $\y$ is an optimal solution of $\P$.
\end{thm}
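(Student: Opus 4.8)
The plan is to treat the two assertions separately: first the attainment of the minimum in (\ref{inv-0}), which is the substantive part, and then the zero-value characterization, which follows almost immediately once attainment is in hand. For existence I would work entirely in the finite-dimensional coefficient space $\R^{s(d_0)}$, identifying each $\tilde f\in\R[\x]_{d_0}$ with its coefficient vector. Let $\F\subset\R^{s(d_0)}$ denote the feasible set of (\ref{inv-0}), i.e.\ those $\tilde f$ for which $\x\mapsto\tilde f(\x)-\tilde f(\y)$ lies in $\psd_{d_0}(\K)$. The first step is to observe that $\F$ is nonempty (every constant polynomial is feasible, since then $\tilde f(\x)-\tilde f(\y)\equiv 0\in\psd_{d_0}(\K)$) and convex. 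The second step is to show $\F$ is closed: the cone $\psd_{d_0}(\K)$ is the intersection over $\x\in\K$ of the closed half-spaces $\{h:h(\x)\geq0\}$, each the preimage of $\R_+$ under the continuous linear evaluation $h\mapsto h(\x)$, hence closed; and $\F$ is the preimage of $\psd_{d_0}(\K)$ under the continuous linear map $\tilde f\mapsto\tilde f(\cdot)-\tilde f(\y)$, hence closed as well. The third step is coercivity of the objective: since $\Vert\cdot\Vert_k$ is a norm, $\Vert\tilde f-f\Vert_k\geq\Vert\tilde f\Vert_k-\Vert f\Vert_k\to\infty$ as $\Vert\tilde f\Vert_k\to\infty$. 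Taking a minimizing sequence $(\tilde f_n)\subset\F$ with $\Vert\tilde f_n-f\Vert_k\to\rho^k$, coercivity forces $(\tilde f_n)$ to be bounded; by Bolzano--Weierstrass a subsequence converges to some $\tilde f^*$, which lies in $\F$ by closedness, and satisfies $\Vert\tilde f^*-f\Vert_k=\rho^k$ by continuity of the norm. Thus $\tilde f^*$ is optimal.

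For the equivalence, note $\rho^k\geq0$ always. If $\y$ is optimal for $\P$ then $f(\x)\geq f(\y)$ for all $\x\in\K$, so $f-f(\y)\in\psd_{d_0}(\K)$ and $\tilde f=f$ is feasible with objective $\Vert f-f\Vert_k=0$; hence $\rho^k=0$. Conversely, if $\rho^k=0$, let $\tilde f^*$ be the optimal solution just produced; then $\Vert\tilde f^*-f\Vert_k=0$ forces $\tilde f^*=f$, since a norm vanishes only at the zero vector. Feasibility of $f$ then reads $f(\x)-f(\y)\geq0$ for all $\x\in\K$, i.e.\ $\y$ is a global minimizer of $f$ on $\K$, so $\y$ is optimal for $\P$.

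The main obstacle is confined to the existence part, specifically to verifying closedness of $\F$ (equivalently of $\psd_{d_0}(\K)$) together with coercivity of the objective; once these are in place, attainment is a standard compactness argument in finite dimension and requires no compactness of $\K$. The hypothesis that $\K$ has nonempty interior is not actually needed for either attainment or the equivalence as stated; its role is only to guarantee that $\psd_{d_0}(\K)$ is pointed---a polynomial vanishing on the open interior of $\K$ vanishes identically---which keeps the inverse problem nondegenerate but is dispensable for the present conclusions.
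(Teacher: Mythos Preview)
Your proof is correct and follows essentially the same route as the paper's: a minimizing-sequence/compactness argument in the finite-dimensional coefficient space, with feasibility of the limit checked via pointwise nonnegativity on $\K$ (which you phrase, equivalently, as closedness of $\psd_{d_0}(\K)$ as an intersection of closed half-spaces), followed by the same direct argument for the equivalence $\rho^k=0\Leftrightarrow\y$ optimal. Your closing remark that the nonempty-interior hypothesis is not actually used here is also accurate; the paper's own proof does not invoke it either.
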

\begin{proof}
Obviously the constant polynomial $\x\mapsto \tilde{f}(\x):=1$ is a feasible solution with associated 
value $\delta:=\Vert \tilde{f}-f\Vert_k$.
Moreover the optimal value of (\ref{inv-0}) is bounded below by $0$. 
Observe that $\Vert \cdot\Vert_k$ defines a norm on $\R[\x]_{d_0}$.
Consider a minimizing sequence $(\tilde{f}^j)\subset\R[\x]_{d_0}$, $j\in\N$, hence such that
$\Vert \tilde{f}^j-f\Vert_k\to\rho^k$ as $j\to\infty$. As we have 
$\Vert \tilde{f}^j-f\Vert_k\leq \delta$ for  every $j$, the sequence $(\tilde{f}^j)$ belongs to the $\ell_k$-ball 
$\B_k(f):=\{\tilde{f}\in\R[\x]_{d_0}\,:\,\Vert \tilde{f}-f\Vert_k\leq\delta\}$, a compact set. Therefore, there is an element
$\tilde{f}^*\in\B_k(f)$ and a subsequence $(j_t)$, $t\in\N$, such that
$\tilde{f}^{j_t}\to\tilde{f}^*$ as $t\to\infty$. Let $\x\in\K$ be fixed arbitrary.
Obviously $(0\leq)\, \tilde{f}^{j_t}(\x)-\tilde{f}^{j_t}(\y)
\to\tilde{f}^*(\x)-\tilde{f}^*(\y)$ as $t\to\infty$, which implies $\tilde{f}^*(\x)-\tilde{f}^*(\y)\geq0$,
and so, as $\x\in\K$ was arbitrary, $\tilde{f}^*-\tilde{f}^*(\y)\geq0$ on $\K$, i.e., $\tilde{f}^*-\tilde{f}^*(\y)\in\psd_{d_0}(\K)$. Finally, we also obtain the desired result 
\[\rho^k\,=\,\lim_{j\to\infty}\Vert \tilde{f}^{j}-f\Vert_k\,=\,\lim_{t\to\infty}\Vert \tilde{f}^{j_t}-f\Vert_k\,=\,\Vert \tilde{f}^*-f\Vert_k.\]
Next, if $\y$ is an optimal solution of $\P$ then $\tilde{f}:=f$ is an optimal solution of 
$\mathcal{P}$ with value $\rho^k=0$. Conversely, if $\rho^k=0$ then $\tilde{f}^*=f$, and so by feasibility of $\tilde{f}^*\,(=f)$ for (\ref{inv-0}),
$f(\x)\geq f(\y)$ for all $\x\in\K$, which shows that $\y$ is an optimal solution of $\P$.
\end{proof}
Theorem \ref{th-ideal} states that the ideal inverse optimization problem is well-defined. However, even though
$\psd_{d_0}(\K)$ is a finite dimensional convex cone, it has no simple and tractable characterization to be used for practical 
computation. Therefore one needs an alternative and more tractable version of problem $\mathcal{P}$. Fortunately,
we next show that in the polynomial context such a formulation exists, thanks to the powerful Putinar's Positivstellensatz (Theorem \ref{th-put} above).

\section{Main result}

As the ideal inverse problem is intractable, we here provide tractable formulations
whose size depends on a parameter $d\in\N$. If the polynomial $\tilde{f}^*$ in Theorem \ref{th-ideal} belongs to $Q(g)$ then when $d$
increases the associated optimal value $\rho^k_d$ converges in finitely many steps to the optimal value $\rho^k$ of the ideal problem (\ref{inv-0}),
and $\tilde{f}^*$ can be obtained by solving finitely many semidefinite programs. And in fact this situation is {\it generic}.\\

With no loss of generality, i.e., up to some change of variable $\x'=\x-\y$ 
we may and will assume that $\y=0\in\K$. 

\subsection{A practical inverse problem}

With $d\in\N$ fixed, consider the following optimization problem $\P_d$:
\begin{equation}
\label{inv-1}
%\P_d:\quad
\begin{array}{rl}
\P_d:\quad\rho_d^k:=\displaystyle\min_{\tilde{f},\sigma_j\in\R[\x] }&\Vert f-\tilde{f}\Vert_k\\
\mbox{s.t.}& \tilde{f}(\x)-\tilde{f}(0)\,=\,\displaystyle\sum_{j=0}^{m} \sigma_j(\x)\,g_j(\x),\quad\forall\x\in\R^n\\
%&\tilde{f}(\y)\,\leq\,f(\y),\\
&\tilde{f}\in\R[\x]_{d_0};\:\sigma_j\in\Sigma[\x]_{d-v_j},\quad j=0,1,\ldots,m,
\end{array}
\end{equation}
where $d_0={\rm deg}\,f$, and $v_j=\lceil ({\rm deg}\,g_j)/2\rceil$, $j=1,\ldots,m$. 

The parameter $d\in\N$ impacts (\ref{inv-1}) by the maximum degree allowed for
the s.o.s. weights $(\sigma_j)\subset\Sigma[\x]$ in Putinar's certificate for the polynomial 
$\x\mapsto \tilde{f}(\x)-\tilde{f}(0)$, and so the higher $d$ is, the lower $\rho_d^k$.
Next, observe that in (\ref{inv-1}), the constraint
\[\tilde{f}(\x)-\tilde{f}(0)\,=\,\displaystyle\sum_{j=0}^{m} \sigma_j(\x)\,g_j(\x),\quad\forall\x\in\R^n,\]
is equivalent to stating that 
$\tilde{f}(\x)-\tilde{f}(0)\in Q^{d_0}_d(g)$, with $Q^{d_0}_d(g)$ as in (\ref{puT-suff1}).
Therefore, in particular, $\tilde{f}(\x)\geq\tilde{f}(0)$ for all $\x\in\K$, and so $0$ is a global minimizer of $\tilde{f}$ on $\K$.
So $\P_d$ is a strengthtening of $\mathcal{P}$ in that one has replaced the constraint
$\tilde{f}-\tilde{f}(0)\in \psd_{d_0}(\K)$ with the stronger condition $\tilde{f}-\tilde{f}(0)\in Q_d^{d_0}(g)$.
And so $\rho^k\leq\rho^k_d$ for all $d\in\N$. However, as we next see,
$\P_d$ is a tractable optimization problem with nice properties. Indeed,
$\P_d$ is a convex optimization problem and even a semidefinite program. 
For instance, if $k=1$ one may rewrite $\P_d$ as:
 
 \begin{equation}
\label{inv-3-k=1}
\begin{array}{rl}
\rho_d^1:=\displaystyle\min_{\lambda_\alpha\geq0,\tilde{f},\Z_j}&\displaystyle\sum_{\alpha\in\N^n_{d_0}\setminus\{0\}}\lambda_\alpha\\
\mbox{s.t.}& \lambda_\alpha +\tilde{f}_\alpha\geq f_\alpha,\quad\forall\alpha\in\N^n_{d_0}\setminus\{0\}\\
& \lambda_\alpha -\tilde{f}_\alpha\geq -f_\alpha,\quad\forall\alpha\in\N^n_{d_0}\setminus\{0\}\\
&\la\Z_0,\B_\alpha\ra+\displaystyle\sum_{j=1}^{m} \la\Z_j,\C^j_\alpha\ra\,=\,\left\{\begin{array}{l}
\tilde{f}_\alpha,\:\mbox{ if }0<\vert\alpha\vert\leq\,d_0\\
0,\:\mbox{ if $\alpha=0$ or $\vert\alpha\vert>d_0$}\end{array}\right.\\
&\Z_j\succeq0,\quad j=0,1,\ldots,m.\end{array}
\end{equation}
with $\B_\alpha$ as in (\ref{balpha}) and $\C^j_\alpha$ as in (\ref{calpha}) (with $g_j$ in lieu of $g$).
If $k=\infty$ then one may rewrite $\P_d$ as:
 
\begin{equation}
\label{inv-3-k=infty}
\begin{array}{rl}
\rho_d^\infty:=\displaystyle\min_{\lambda\geq0,\tilde{f},\Z_j}&\lambda\\
\mbox{s.t.}& \lambda +\tilde{f}_\alpha\geq f_\alpha,\quad\forall\alpha\in\N^n_{d_0}\setminus\{0\}\\
& \lambda -\tilde{f}_\alpha\geq -f_\alpha,\quad\forall\alpha\in\N^n_{d_0}\setminus\{0\}\\
&\la\Z_0,\B_\alpha\ra+\displaystyle\sum_{j=1}^{m} \la\Z_j,\C^j_\alpha\ra\,=\,\left\{\begin{array}{l}
\tilde{f}_\alpha,\:\mbox{ if }0<\vert\alpha\vert\leq\,d_0\\
0,\:\mbox{ if $\alpha=0$ or $\vert\alpha\vert>d_0$}\end{array}\right.\\
&\Z_j\succeq0,\quad j=0,1,\ldots,m,\end{array}
\end{equation}
and finally, if $k=2$ then one may rewrite $\P_d$ as:
\begin{equation}
\label{inv-3-k=2}
\begin{array}{rl}
\rho_d^2:=\displaystyle\min_{\lambda\geq0,\tilde{f},\Z_j}&\displaystyle\sum_{\alpha\in\N^n_{d_0}\setminus\{0\}}\lambda_\alpha\\
\mbox{s.t.}&\left[\begin{array}{cc}\lambda_\alpha &\tilde{f}_\alpha-f_\alpha\\
\tilde{f}_\alpha-f_\alpha&1\end{array}\right]\succeq0,\quad\forall\alpha\in\N^n_{d_0}\setminus\{0\}\\
&\la\Z_0,\B_\alpha\ra+\displaystyle\sum_{j=1}^{m} \la\Z_j,\C^j_\alpha\ra\,=\,\left\{\begin{array}{l}
\tilde{f}_\alpha,\:\mbox{ if }0<\vert\alpha\vert\leq\,d_0\\
0,\:\mbox{ if $\alpha=0$ or $\vert\alpha\vert>d_0$}\end{array}\right.\\
&\Z_j\succeq0,\quad j=0,1,\ldots,m.\end{array}
\end{equation}
\begin{rem}
\label{rem-f(0)}
{\rm Observe that in any feasible solution $(\tilde{f},\lambda,\Z_j)$ in all formulations
(\ref{inv-3-k=1})-(\ref{inv-3-k=2}),
$\tilde{f}_0$ plays  no role in the constraints of (\ref{inv-1}), but since we minimize
$\Vert \tilde{f}-f\Vert_k$ then it is always optimal to set $\tilde{f}_0=f_0$.
That is, $\tilde{f}_d(0)=\tilde{f}_0=f_0=f(0)$.
}\end{rem}

\subsection*{Sparsity}
The semidefinite program (\ref{inv-1})-(\ref{inv-3-k=1}) has $m+1$ Linear Matrix Inequalities (LMI's) 
$\Z_j\succeq0$ of size $O(n^d)$, which limits its application to
problems $\P$ of modest size. However large scale problems usually exhibit 
sparsity patterns which sometimes can be exploited. For instance, in \cite{lassparse} we have provided
a specialized ``sparse" version of Theorem \ref{th-put}
 for problems with structured sparsity
as described in Waki et al. \cite{waki}. Hence, with this specialized version
of  Putinar's Positivstellensatz, one obtains
a {\it sparse} positivity certificate which when substituted in (\ref{inv-1}),
would permit to solve (\ref{inv-1}) for problems of much larger size. Typically, in \cite{waki}
the authors have applied the ``sparse semidefinite relaxations" to problem $\P$ with up
to 1000 variables! Moreover, the {\it running intersection property} that must satisfy the sparsity pattern
for convergence guarantee of such relaxations \cite{lassparse}, is {\it not} needed in the present context of inverse optimization. This is because one imposes $\tilde{f}$ to satisfy this specialized Putinar's Positivstellensatz.

\subsection{Duality}

The semidefinite program dual of (\ref{inv-3-k=1}) reads

\[\left\{\begin{array}{ll}
\displaystyle\max_{\u,\v\geq0,\z}&\displaystyle
\sum_{\alpha\in\N^n_{d_0}\setminus\{0\}}f_\alpha(u_\alpha-v_\alpha)\:(=L_\z(f(0)-f))\\
\mbox{s.t.}&u_\alpha+v_\alpha\,\leq \,1,\quad\forall\,\alpha\in\N^n_{d_0}\setminus\{0\}\\
&u_\alpha-v_\alpha+z_\alpha \,=\,0,\quad\forall\alpha\in\N^n_{d_0}\setminus\{0\}\\
&\sum_{\alpha\in\N^n_d}z_\alpha\B_\alpha\succeq0\\
&\sum_{\alpha\in\N^n_d}z_\alpha\C^j_\alpha\succeq0,\quad j=1,\ldots,m,
\end{array}\right.\]
which, recalling the respective definitions (\ref{balpha}) and (\ref{calpha}) of the moment and localizing matrix, is the same as
\begin{equation}
\label{dual-k=1}
\left\{\begin{array}{ll}
\displaystyle\max_{\u,\v\geq0,\z}&\displaystyle
\sum_{\alpha\in\N^n_{d_0}\setminus\{0\}}f_\alpha(u_\alpha-v_\alpha)\:(=L_\z(f(0)-f))\\
\mbox{s.t.}&u_\alpha+v_\alpha\,\leq \,1,\quad\forall\,\alpha\in\N^n_{d_0}\setminus\{0\}\\
&u_\alpha-v_\alpha+z_\alpha \,=\,0,\quad\forall\alpha\in\N^n_{d_0}\setminus\{0\}\\
&\M_d(\z),\,\M_{d-v_j}(g_j\,\z)\,\succeq\,0,\quad j=1,\ldots,m.
\end{array}\right.
\end{equation}

Similarly, the semidefinite program dual of (\ref{inv-3-k=infty}) reads
\begin{equation}
\label{dual-k=infty}
\left\{\begin{array}{ll}
\displaystyle\max_{\u,\v\geq0,\z}&\displaystyle
\sum_{\alpha\in\N^n_{d_0}\setminus\{0\}}f_\alpha(u_\alpha-v_\alpha)\:(=L_\z(f(0)-f))\\
\mbox{s.t.}&\displaystyle\sum_{\alpha\in\N^n_{d_0}\setminus\{0\}}u_\alpha+v_\alpha\,\leq \,1\\
&u_\alpha-v_\alpha+z_\alpha\,=\,0,\quad\forall\alpha\in\N^n_{d_0}\setminus\{0\}\\
&\M_d(\z),\,\M_{d-v_j}(g_j\,\z)\,\succeq\,0,\quad j=1,\ldots,m,
\end{array}\right.
\end{equation}
and the semidefinite program dual of (\ref{inv-3-k=2}) reads
\begin{equation}
\label{dual-k=2}
\left\{\begin{array}{ll}
\displaystyle\max_{\z,\Delta_\alpha}&\displaystyle
\sum_{\alpha\in\N^n_{d_0}\setminus\{0\}}\left\la \Delta_\alpha,\left[\begin{array}{cc}0 &f_\alpha\\f_\alpha&-1\end{array}\right]\right\ra\\
\mbox{s.t.}&\displaystyle
\left\la \Delta_\alpha,\left[\begin{array}{cc}1 &0\\0&0\end{array}\right]\right\ra
\leq \,1,\quad\forall\alpha\in\N^n_{d_0}\setminus\{0\}\\
&\\
&\left\la \Delta_\alpha,\left[\begin{array}{cc}0 &1\\1&0\end{array}\right]\right\ra
+z_\alpha \,=\,0,\quad\forall\alpha\in\N^n_{d_0}\setminus\{0\}\\
&\M_d(\z),\,\M_{d-v_j}(g_j\,\z)\,\succeq\,0,\quad j=1,\ldots,m\\
&\Delta_\alpha\succeq0,\quad \forall\alpha\in\N^n_{d_0}\setminus\{0\}.\\
\end{array}\right.
\end{equation}
One may show that one may replace the criterion in (\ref{dual-k=2}) 
with the equivalent concave criterion
\[\max_{\z}\:\left\{L_\z(f(0)-f))-\frac{1}{4}\sum_{\alpha\in\N^n_{d_0}\setminus\{0\}}z_\alpha^2\:\right\}.\]

\begin{lem}
\label{nodualgap}
Assume that $\K\subset\R^n$ has nonempty interior. Then there is no duality gap between the semidefinite programs (\ref{inv-3-k=1}) and (\ref{dual-k=1}), (\ref{inv-3-k=infty}) and (\ref{dual-k=infty}),
and (\ref{inv-3-k=2}) and (\ref{dual-k=2}). Moreover, all semidefinite programs
(\ref{inv-3-k=1}), (\ref{inv-3-k=infty}) and (\ref{inv-3-k=2}) have an optimal solution $\tilde{f}_d\in\R[\x]_{d_0}$.
\end{lem}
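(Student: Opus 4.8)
The plan is to derive both conclusions simultaneously from the strong duality theorem for conic (semidefinite) programs: if a minimization conic program is feasible with finite value and its dual maximization program admits a strictly feasible (Slater) point, then there is no duality gap and the \emph{minimization} problem attains its optimum. Since the three duals (\ref{dual-k=1}), (\ref{dual-k=infty}), (\ref{dual-k=2}) share exactly the same moment/localizing block $\M_d(\z),\M_{d-v_j}(g_j\z)\succeq0$ and differ only in their cheap scalar (resp.\ $2\times2$) blocks in the variables $\u,\v$ (resp.\ $\Delta_\alpha$), I would treat the three cases in parallel, the essential work being identical. Throughout I may assume each $g_j\not\equiv0$, since a constraint with $g_j\equiv0$ is vacuous (its localizing matrix is identically zero) and may simply be discarded before the argument.

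First I would record primal feasibility and boundedness. Taking $\tilde f$ to be a constant polynomial, so $\tilde f_\alpha=0$ for $\alpha\neq0$, together with $\Z_j=0$ for all $j$, satisfies the coefficient-matching equalities and the trivial certificate $\tilde f-\tilde f(0)=\sum_j\sigma_jg_j$ with all $\sigma_j=0$; the remaining inequality/LMI constraints are then met by $\lambda_\alpha=|f_\alpha|$ for $k=1$ (and by the obvious analogues for $k=\infty,2$). Hence each primal is feasible, and since every objective is a sum of nonnegative quantities, each optimal value $\rho_d^k$ lies in $[0,\infty)$.

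The heart of the argument, and the step I expect to be the main obstacle, is the construction of a strictly feasible dual point; this is exactly where the hypothesis $\mathrm{int}\,\K\neq\emptyset$ is used. As each zero set $\{g_j=0\}$ is nowhere dense, the open set $\mathrm{int}\,\K\setminus\bigcup_j\{g_j=0\}$ is nonempty, so there is $\x_0\in\mathrm{int}\,\K$ with $g_j(\x_0)>0$ for all $j$, and a closed ball $B\subset\mathrm{int}\,\K$ around $\x_0$ on which every $g_j$ stays strictly positive. Let $\mu$ be $\epsilon$ times the uniform measure on $B$, with moment sequence $\z=(z_\alpha)$. Because the support of $\mu$ has nonempty interior, $\langle\h,\M_d(\z)\h\rangle=\int h^2\,d\mu>0$ for every nonzero coefficient vector $\h$ (with associated polynomial $h$), so $\M_d(\z)\succ0$; likewise $\langle\h,\M_{d-v_j}(g_j\z)\h\rangle=\int h^2\,g_j\,d\mu>0$ gives $\M_{d-v_j}(g_j\z)\succ0$. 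Thus all the moment/localizing LMIs in the three duals hold \emph{strictly}. Choosing $\epsilon>0$ small forces every $|z_\alpha|$ small, so the coupling equalities can be met strictly too: for $k=1,\infty$ pick $u_\alpha,v_\alpha>0$ with $u_\alpha-v_\alpha=-z_\alpha$ and $u_\alpha+v_\alpha<1$ (possible once $|z_\alpha|<1$), and for $k=2$ set $\Delta_\alpha=\left[\begin{smallmatrix}1/2 & -z_\alpha/2\\ -z_\alpha/2 & 1/2\end{smallmatrix}\right]\succ0$, which satisfies $2(\Delta_\alpha)_{12}+z_\alpha=0$ together with the strict inequality $(\Delta_\alpha)_{11}=1/2<1$. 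This produces a Slater point for each of (\ref{dual-k=1}), (\ref{dual-k=infty}), (\ref{dual-k=2}).

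Finally I would invoke conic strong duality (e.g.\ as in \cite{handbook}): strict feasibility of the maximization problems (\ref{dual-k=1})--(\ref{dual-k=2}), together with feasibility and finiteness of their companion minimizations (\ref{inv-3-k=1})--(\ref{inv-3-k=2}), forces $\rho_d^k$ to equal the corresponding dual value (no duality gap) and guarantees that the minimization problems attain their optima. In particular each of (\ref{inv-3-k=1}), (\ref{inv-3-k=infty}), (\ref{inv-3-k=2}) possesses an optimal solution furnishing a minimizer $\tilde f_d\in\R[\x]_{d_0}$, as claimed. Everything beyond the Slater construction is routine bookkeeping across the three cases.
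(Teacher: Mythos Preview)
Your proposal is correct and follows essentially the same route as the paper: establish primal feasibility with finite value via a constant $\tilde f$, build a strictly feasible dual point from the moments of a measure supported on ${\rm int}\,\K$ (you use the uniform measure on a small ball where all $g_j>0$, the paper uses a scaled Gaussian-weighted measure on $\K$), and then invoke conic strong duality. Your treatment is in fact a touch more careful in two places---you explicitly arrange $u_\alpha,v_\alpha>0$ and you preemptively discard any $g_j\equiv0$ so that the localizing matrices can indeed be made strictly positive definite---but these are refinements of the same argument, not a different one.
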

\begin{proof}
The proof is detailed for the case $k=1$ and omitted for the cases $k=2$ and $k=\infty$ because
it is very similar.
Observe that $\rho_d^1\geq0$ and the constant polynomial $\tilde{f}(\x)=0$ for all $\x\in\R^n$, is an obviously feasible solution of
(\ref{inv-1}) (hence of (\ref{inv-3-k=1})).
Therefore $\rho_d^1$ being finite, it suffices to prove that Slater's condition\footnote{Slater's condition holds if there exists a strictly feasible solution, and so
for the dual (\ref{dual-k=1}), if there exists  $\z$ such that $\M_d(\z),\M_{d-v_j}(g_j\,\z)\succ0$, $j=1,\ldots,m$, and
$u_\alpha+v_\alpha<1,\,\forall\alpha\in\N^n_{d_0}\setminus\{0\}$. Then from a standard result in convex optimization, there is no duality gap between
(\ref{inv-3-k=1}) and (\ref{dual-k=1}), and if the values are bounded then (\ref{inv-3-k=1}) has an optimal solution.}
holds for the dual (\ref{dual-k=1}). Then the conclusion of Lemma
\ref{nodualgap} follows from a standard result of convex optimization.
 Let $\mu$ be the finite Borel measure defined by
  \[\mu(B)\,:=\, \int_{B\cap\K}\e^{-\Vert\x\Vert^2}d\x,\qquad \forall B\in\mathcal{B}\]
  (where $\mathcal{B}$ is the usual Borel $\sigma$-field), 
 and  let $\z=(z_\alpha)$, $\alpha\in\N^n_{2d}$, with
\[z_\alpha\,:=\,\kappa\,\int_\K \x^\alpha \,d\mu(\x),\qquad \alpha\in\N^n_{2d},\]
for some $\kappa>0$ sufficiently small to ensure that
\begin{equation}
\label{cond-1}
\kappa \,\displaystyle\left\vert \int \x^\alpha\,d\mu(\x)\right\vert \,<\,1,\qquad\forall \alpha\in\N^n_{2d}\setminus\{0\}.
\end{equation}

Define  $u_\alpha=\max[0,-z_\alpha]$ and $v_\alpha=\max[0,z_\alpha]$, 
$\alpha\in\N^n_{d_0}\setminus\{0\}$, so that
$u_\alpha+v_\alpha< 1$, $\alpha\in\N^n_{2d}\setminus\{0\}$. 
Hence $(u_\alpha,v_\alpha,\z)$ is a feasible solution
of (\ref{dual-k=1}). In addition, $\M_d(\z)\succ0$ and $\M_{d-v_j}(g_j\,\z)\succ0$, $j=1,\ldots,m$,
because $\K$ has nonempty interior, and so Slater's condition
holds for (\ref{dual-k=1}), the desired result.

If $k=\infty$ one chooses $\z$ such that
\[\kappa \,\displaystyle\sum_{\alpha\in\N^n_{d_0}\setminus\{0\}}\left\vert \int \x^\alpha\,d\mu(\x)\right\vert \,<\,1,\]
and if $k=2$ then one chooses $\z$ as in (\ref{cond-1}) 
and $\Delta_\alpha:=\left[\begin{array}{cc}1/2&\kappa_\alpha\\\kappa_\alpha &1\end{array}\right]\succ0$, for all 
$\alpha\in\N^n_{d_0}\setminus\{0\}$, such that
\[2\kappa_\alpha:=\kappa \,\displaystyle\int \x^\alpha\,d\mu(\x),\qquad\forall \alpha\in\N^n_{2d}\setminus\{0\}.\]
\end{proof}

\begin{thm}
\label{th1}
Assume that $\K$ in (\ref{setk}) has nonempty interior, and
let $\x^*\in\K$ be  a global minimizer of $\P$ with optimal value $f^*$,
and let $\tilde{f}_d\in\R[\x]_{d_0}$ be an optimal solution of $\P_d$ in (\ref{inv-1}) with optimal value $\rho_d^k$. Then:

{\rm (a)}  $0\in\K$ is a global minimizer of the problem $\tilde{f}^*_d=\min_\x\{\tilde{f}_d(\x)\,:\,\x\in\K\}$. In particular,
if $\rho_d^k=0$ then $\tilde{f}_d=f$ and $0$ is a global minimizer of $\P$.\\

{\rm (b)} If $k=1$ then $f^*\,\leq\,f(0)\,\leq\,f^*+\rho_d^1\,\displaystyle\sup_{\alpha\in\N^n_{d_0}} \vert (\x^*)^\alpha\vert$. In particular, if $\K\subseteq [-1,1]^n$ then
$f^*\,\leq\,f(0)\,\leq\,f^*+\rho_d^1$.\\

{\rm (c)} If $k=\infty$ then $f^*\,\leq\,f(0)\,\leq\,f^*+\rho_d^\infty\displaystyle\sum_{\alpha\in\N^n_{d_0}} \vert (\x^*)^\alpha\vert$. In particular if $\K\subseteq [-1,1]^n$ then
$f^*\,\leq\,f(0)\,\leq\,f^*+s(d_0)\,\rho_d^\infty$.
\end{thm}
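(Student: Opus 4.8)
The plan is to derive (a) directly from the defining feasibility constraint of $\P_d$, and then to obtain the two-sided bounds in (b) and (c) by evaluating the resulting nonnegativity certificate at a true global minimizer $\x^*$ of $\P$ and pairing the coefficient-difference vector $\tilde{f}_d-f$ against the monomial-value vector $((\x^*)^\alpha)$ via H\"older's inequality.

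First I would prove (a). The constraint of $\P_d$ in (\ref{inv-1}) states that $\tilde{f}_d(\x)-\tilde{f}_d(0)=\sum_{j=0}^m \sigma_j(\x)\,g_j(\x)$ with each $\sigma_j$ a sum of squares; that is, $\tilde{f}_d-\tilde{f}_d(0)\in Q_d^{d_0}(g)$. Since $g_0\equiv 1$ and each $g_j\geq 0$ on $\K$ while each $\sigma_j\geq 0$ everywhere, this immediately gives $\tilde{f}_d(\x)\geq\tilde{f}_d(0)$ for all $\x\in\K$, so $0$ is a global minimizer of $\tilde{f}_d$ on $\K$. For the second assertion, $\rho_d^k=0$ means $\|\tilde{f}_d-f\|_k=0$; because $\|\cdot\|_k$ is a norm on $\R[\x]_{d_0}$ this forces $\tilde{f}_d=f$, and the first part then makes $0$ a global minimizer of $f$ on $\K$, i.e.\ of $\P$.

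For (b) and (c) the left inequality $f^*\leq f(0)$ is immediate since $0\in\K$. For the upper bound I would evaluate the certificate from (a) at $\x^*$, giving $\tilde{f}_d(\x^*)\geq\tilde{f}_d(0)$, and then invoke Remark \ref{rem-f(0)} to replace $\tilde{f}_d(0)$ by $f(0)$, so that $f(0)\leq\tilde{f}_d(\x^*)$. Writing $\tilde{f}_d(\x^*)=f(\x^*)+\sum_{\alpha}(\tilde{f}_{d,\alpha}-f_\alpha)(\x^*)^\alpha$ and using $f(\x^*)=f^*$, the task reduces to bounding $|\sum_\alpha(\tilde{f}_{d,\alpha}-f_\alpha)(\x^*)^\alpha|$. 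Here the $\alpha=0$ term vanishes (again by Remark \ref{rem-f(0)}, the constant coefficients of $\tilde{f}_d$ and $f$ coincide), so the sum runs over $\alpha\in\N^n_{d_0}\setminus\{0\}$. The two cases are then just the two dual H\"older pairings: for $k=1$ I would factor out $\sup_\alpha|(\x^*)^\alpha|$ and recognize $\sum_\alpha|\tilde{f}_{d,\alpha}-f_\alpha|=\rho_d^1$, yielding $f(0)\leq f^*+\rho_d^1\sup_\alpha|(\x^*)^\alpha|$; for $k=\infty$ I would instead factor out $\max_\alpha|\tilde{f}_{d,\alpha}-f_\alpha|=\rho_d^\infty$ and leave $\sum_\alpha|(\x^*)^\alpha|$, yielding $f(0)\leq f^*+\rho_d^\infty\sum_\alpha|(\x^*)^\alpha|$. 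Finally, when $\K\subseteq[-1,1]^n$ every coordinate of $\x^*$ lies in $[-1,1]$, so $|(\x^*)^\alpha|\leq 1$ for every $\alpha$; this makes the supremum at most $1$ in (b) and bounds the $\ell_1$ sum by the number of monomials $s(d_0)=\binom{n+d_0}{n}$ in (c), giving the stated specializations.

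There is no serious obstacle here: the argument is essentially a one-line duality pairing once (a) is in place. The only points requiring care are that the constant terms agree, so the $\alpha=0$ monomial does not enter the estimate, and that the pairing is applied in the correct orientation ($\ell_1$ coefficient norm against $\ell_\infty$ monomial values for $k=1$, and conversely for $k=\infty$).
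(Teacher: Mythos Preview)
Your proposal is correct and follows essentially the same route as the paper's own proof: part (a) comes straight from the feasibility constraint $\tilde{f}_d-\tilde{f}_d(0)\in Q_d^{d_0}(g)$, and parts (b)--(c) are obtained by evaluating at $\x^*$, using $\tilde{f}_d(0)=f(0)$ from Remark~\ref{rem-f(0)}, and bounding $|\tilde{f}_d(\x^*)-f(\x^*)|$ via the $(\ell_1,\ell_\infty)$ and $(\ell_\infty,\ell_1)$ H\"older pairings on the coefficient and monomial-value vectors. The only cosmetic difference is that the paper writes the decomposition starting from $f^*=f(\x^*)$ rather than from $f(0)\leq\tilde{f}_d(\x^*)$, which amounts to the same chain of inequalities read in the opposite direction.
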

\begin{proof}
(a) Existence of $\tilde{f}_d$ is guaranteed by Lemma \ref{nodualgap}. From the constraints of (\ref{inv-1}) we have:
$\tilde{f}_d(\x)-\tilde{f}(0)=\sum_{j=0}^{m}\sigma_j(\x)\,g_j(\x)$ 
which implies that
$\tilde{f}_d(\x)\geq \tilde{f}(0)$ for all $\x\in\K$, 
and so $0$ is a global minimizer of the optimization problem $\P':\:\min_\x\{\tilde{f}_d(\x)\,:\,\x\in\K\}$. 

(b) Let $\x^*\in\K$ be a global minimizer of $\P$. Observe that with $k=1$,
\begin{eqnarray}
\nonumber
f^*=f(\x^*)&=&\underbrace{f(\x^*)-\tilde{f}_d(\x^*)}+\underbrace{\tilde{f}_d(\x^*)-\tilde{f}_d(0)}_{\geq0}+\tilde{f}_d(0)\\
\nonumber
&\geq&\tilde{f}_d(0)-\vert \tilde{f}_d(\x^*)-f(\x^*)\vert\\
\nonumber
&\geq&\tilde{f}_d(0)-\Vert \tilde{f}_d-f\Vert_1\,\times\sup_{\alpha\in\N^n_{d_0}}\vert(\x^*)^\alpha\vert\\
\label{a}
&\geq&f(0)-\rho^1_d\,\sup_{\alpha\in\N^n_{d_0}}\vert(\x^*)^\alpha\vert
\end{eqnarray}
since $\tilde{f}_d(0)=f(0)$; see Remark \ref{rem-f(0)}.

(c) The proof is similar to that of (b) using that with $k=\infty$,
\[\vert\tilde{f}_d(\x)-f(\x)\vert \,\geq\,\left(\sup_{\alpha\in\N^n_{d_0}}\vert \tilde{f}_{d\alpha}-f_\alpha\vert\right)\times\sum_{\alpha\in\N^n_{d_0}}\vert\x^\alpha\vert.\]
\end{proof}
So not only Theorem \ref{th1} states that $0$ is the global optimum of the optimization problem
$\min\{\tilde{f}_d(\x)\,:\,\x\in\K\}$, but it also states that the optimal value $\rho^k_d$
also measures how far is $f(0)$ from the optimal value $f^*$ of the initial problem $\P$.
Moreover, observe that Theorem \ref{th1} merely requires existence of a minimizer
and nonemptyness of $\K$.  
In particular, $\K$  may not be compact.

\subsection{A canonical form for the $\ell_1$-norm}

When $\K$ is compact and if one uses the $\ell_1$-norm then the 
optimal solution $\tilde{f}_d\in\R[\x]_{d_0}$ in Theorem \ref{th1} (with $k=1$) takes a particularly
simple {\it canonical} form:

As $\K$ is compact we may and will assume (possibly after some scaling) that $\K\subseteq [-1,1]^n$
and so in the definition of (\ref{setk}) we may and will add the $n$ redundant quadratic constraints $g_{m+i}(\x)\geq0$, $i=1,\ldots,n$,
with $\x\mapsto g_{m+i}(\x)=1-x_i^2$ for every $i$, that is,
\begin{equation}
\label{setk-1}
\K=\{\x\in\R^n\::\:g_j(\x)\geq0,\quad j=1,\ldots,m+n\},\end{equation}
and
\[Q_d(g)\,=\,\left\{\sum_{j=0}^{n+m}\sigma_j\,g_j\::\quad\sigma_j\in\Sigma[\x]_{d-v_j},\:j=0,\ldots,m+n\right\},\]
which is obviously Archimedean.
\begin{thm}
\label{th1-ell1}
Assume that $\K$ in (\ref{setk-1}) has a nonempty interior and 
let $\tilde{f}_d\in\R[\x]_{d_0}$ be an optimal solution of $\P_d$ in (\ref{inv-1}) (with $m+n$ instead of $m$) with optimal value $\rho_d^1$
for the $\ell_1$-norm. Then:

(i) $\tilde{f}_d$ is of the form
\begin{equation}
\label{th1-ell1-1}
\tilde{f}_d(\x)\,=\,\left\{\begin{array}{ll}f(\x)+\b'\x&\mbox{if $d_0=1$}\\
f(\x)+\b'\x+\sum_{i=1}^n\lambda_i^*\,x_i^2&\mbox{if $d_0>1$,}\end{array}\right.
\end{equation}
for some vector $\b\in\R^n$ and some 
nonnegative vector $\lambda^*\in\R^n$, optimal solution of the semidefinite program:
 \begin{equation}
\label{inv-3-ell1}
\begin{array}{rl}
\rho_d^1:=\displaystyle\min_{\lambda,\b}&\displaystyle
\Vert\b\Vert_1+\sum_{i=1}^n\lambda_i\\
\mbox{s.t.}& f-f(0)+\b'\x+\displaystyle\sum_{i=1}^n\lambda_i\,x_i^2\,\in Q_{d}(g),\quad 
\lambda\geq0.
\end{array}
\end{equation}
(ii) The vector $\b$ is of the form $-\nabla f(0)+\sum_{j\in J(0)}\theta_j\,\nabla g_j(0)$ for some nonnegative scalars $(\theta_j)$
(where $j\in J(0)$ if and only if $g_j(0)=0$).
\end{thm}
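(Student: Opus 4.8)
The plan is to establish the canonical form (i) by a transfer argument that pushes every non-canonical monomial of $\tilde f_d-f$ into pure squares $x_i^2$ at no $\ell_1$ cost, and then to deduce (ii) by differentiating at the origin a Putinar certificate of the optimal $\tilde f_d$. As a preliminary reduction (for $k=1$), Remark \ref{rem-f(0)} lets me take $\tilde f_d(0)=f(0)$, so with $p:=\tilde f_d-f(0)$ the problem $\P_d$ becomes: minimize $\Vert p-(f-f(0))\Vert_1$ over $p\in Q_d(g)\cap\R[\x]_{d_0}$ with $p(0)=0$, and the perturbation to analyze is $h:=\tilde f_d-f=p-(f-f(0))$, which has no constant term.

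For (i) I would split $h=\b'\x+r$, where $\b$ collects the linear coefficients and $r$ the monomials of degree $\ge2$. The key is the domination inequality valid on $[-1,1]^n\supseteq\K$: for $|\alpha|\ge2$, $|x_i|^{|\alpha|}\le x_i^2$ whenever $|x_i|\le1$, so weighted AM--GM with weights $\alpha_i/|\alpha|$ gives $|\x^\alpha|=\prod_i|x_i|^{\alpha_i}\le\sum_i\frac{\alpha_i}{|\alpha|}x_i^2$; hence each $D_\alpha^{\pm}:=\sum_i\frac{\alpha_i}{|\alpha|}x_i^2\mp\x^\alpha$ is nonnegative on $\K$. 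I then replace each monomial $h_\alpha\x^\alpha$ of $r$ by $|h_\alpha|\sum_i\frac{\alpha_i}{|\alpha|}x_i^2$, i.e. I set $\hat f:=f+\b'\x+\sum_i\lambda_i^*x_i^2$ with $\lambda_i^*:=\sum_{|\alpha|\ge2}|h_\alpha|\frac{\alpha_i}{|\alpha|}\ge0$. By construction $\hat f-\tilde f_d=\sum_{|\alpha|\ge2}|h_\alpha|(\sum_i\frac{\alpha_i}{|\alpha|}x_i^2-\mathrm{sgn}(h_\alpha)\x^\alpha)$ is a nonnegative combination of the $D_\alpha^{\pm}$, and since $\sum_i\lambda_i^*=\sum_{|\alpha|\ge2}|h_\alpha|=\Vert r\Vert_1$ the value is preserved: $\Vert\hat f-f\Vert_1=\Vert\b\Vert_1+\sum_i\lambda_i^*=\Vert h\Vert_1=\rho_d^1$. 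Provided each $D_\alpha^{\pm}$ belongs to $Q_d(g)$, the polynomial $\hat f-f(0)=(\tilde f_d-f(0))+(\hat f-\tilde f_d)$ is a sum of two elements of the convex cone $Q_d(g)$, so $\hat f$ is feasible for $\P_d$, hence optimal and in canonical form. The degree-$2$ cross terms cost nothing, since $D_{e_i+e_j}^{\pm}=\tfrac12(x_i\mp x_j)^2$ is a square; and if $d_0=1$ no monomial of degree $\ge2$ arises, so $\hat f=f+\b'\x$.

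For (ii) I use the Putinar certificate guaranteed by feasibility of the optimal (canonical) $\tilde f_d$: $\tilde f_d(\x)-f(0)=\sigma_0+\sum_{j=1}^m\sigma_jg_j+\sum_i\sigma_{m+i}(1-x_i^2)$ with $\sigma_j\in\Sigma[\x]$. Evaluating at $\x=0$ gives $0=\sigma_0(0)+\sum_{j=1}^m\sigma_j(0)g_j(0)+\sum_i\sigma_{m+i}(0)$, a sum of nonnegative terms ($0\in\K$ forces $g_j(0)\ge0$, and each $\sigma_j$ is a sum of squares); hence every term is zero, in particular $\sigma_j(0)g_j(0)=0$ for $1\le j\le m$ and $\sigma_0(0)=\sigma_{m+i}(0)=0$. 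Because a sum of squares vanishing at a point has zero gradient there, differentiating the certificate at $0$ annihilates every term except those coming from constraints active at $0$, leaving $\nabla\tilde f_d(0)=\sum_{j\in J(0)}\sigma_j(0)\nabla g_j(0)$. Writing $\theta_j:=\sigma_j(0)\ge0$ and using that the canonical form gives $\nabla\tilde f_d(0)=\nabla f(0)+\b$ (the term $\sum_i\lambda_i^*x_i^2$ has vanishing gradient at $0$), I obtain $\b=-\nabla f(0)+\sum_{j\in J(0)}\theta_j\nabla g_j(0)$, which is (ii).

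The one genuinely technical point is the claim $D_\alpha^{\pm}\in Q_d(g)$ for $|\alpha|\ge3$: nonnegativity on $\K$ is elementary, but a Putinar certificate of degree within the prescribed budget $d$ must be exhibited. The univariate building block is explicit, e.g. $x_i^2-x_i^3=\tfrac12(x_i^2-x_i)^2+\tfrac12x_i^2(1-x_i^2)\in Q_d(g)$, and the general $D_\alpha^{\pm}$ can be certified in the same spirit by peeling off factors $1-x_i^2$ and completing squares; the certificate degree grows only with $|\alpha|\le d_0$, so the construction stays inside $Q_d(g)$ whenever $d$ is large enough for $\P_d$ to be feasible (the regime of interest). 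Carrying out this degree accounting---especially checking that the squares forced by interior zeros of $D_\alpha^{\pm}$ respect the bound---is where the real work lies; the rest is the elementary transfer bookkeeping of (i) and the evaluation-at-$0$ computation of (ii).
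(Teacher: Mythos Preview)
Your argument for (ii) is essentially the paper's: both evaluate a Putinar certificate at $\x=0$ to force $\sigma_0(0)=0$ and $\sigma_j(0)g_j(0)=0$, then differentiate at $0$ and use that an s.o.s.\ polynomial vanishing at a point has zero gradient there.

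For (i) you take a genuinely different route. The paper works on the \emph{dual} side: invoking a moment--matrix inequality of Lasserre and Netzer \cite{lass-netzer}, it shows that whenever $\M_d(\z)\succeq0$ and the localizing constraints $\M_{d-1}((1-x_i^2)\z)\succeq0$ hold, one automatically has $|z_\alpha|\leq\max_i L_\z(x_i^2)$ for every $|\alpha|\geq2$. Hence in the dual (\ref{dual-k=1}) all constraints $|z_\alpha|\leq1$ with $|\alpha|\geq2$ are implied by the $n$ constraints $L_\z(x_i^2)\leq1$; the dual collapses to a problem with only $3n$ scalar constraints, and re-dualising yields (\ref{inv-3-ell1}) directly---for \emph{every} admissible $d$, with no certificate to construct.

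Your primal ``transfer'' idea is attractive and gives the right intuition, but its validity rests entirely on the claim $D_\alpha^{\pm}=\sum_i\tfrac{\alpha_i}{|\alpha|}x_i^2\mp\x^\alpha\in Q_d(g)$ for the \emph{given} $d$ and every $2\leq|\alpha|\leq d_0$, which you explicitly leave open. This is not a routine degree count: you need, for each such $\alpha$, an explicit representation with $\sigma_0\in\Sigma[\x]_d$ and $\sigma_{m+i}\in\Sigma[\x]_{d-1}$, and already your univariate cubic example produces the degree-$4$ square $(x_i^2-x_i)^2$. For mixed monomials of higher degree the ``peel off $1-x_i^2$'' recipe becomes delicate (try $\alpha=(3,3)$ or $\alpha=(1,1,1)$ and track the s.o.s.\ degrees), and it is not clear the construction stays within the budget $2d\geq d_0$ uniformly. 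The paper's dual argument sidesteps this completely: the redundancy of the $|z_\alpha|\leq1$ constraints is a known consequence of the semidefinite constraints at level $d$, so no primal certificate for $D_\alpha^{\pm}$ is ever needed.
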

\begin{proof}
(i) Notice that the dual (\ref{dual-k=1}) of (\ref{inv-3-k=1}) is equivalent to:
\begin{equation}
\label{equiv-1}
\left\{\begin{array}{ll}\displaystyle\max_{\z}&L_\z(f(0)-f))\\
\mbox{s.t.}&\M_d(\z),\,\M_{d-v_j}(g_j\,\z)\,\succeq\,0,\quad j=1,\ldots,m+n,\\
&\vert z_\alpha\vert \leq 1,\quad\forall\,\alpha\in\N^n_{d_0}\setminus\{0\}.
\end{array}\right.\end{equation}
%But $\M_d(\z)\succeq0\Rightarrow\M_{d_0/2}(\z)\succeq0$.
Next, since $\M_{d}(\z)\succeq0$ one may invoke 
same arguments as those used in
Lasserre and Netzer \cite[Lemma 4.1, 4.2]{lass-netzer}, to
obtain that for very $\alpha\in\N^n_{2d}$ with $\vert\alpha\vert>1$,
\[\M_d(\z)\succeq0\,\Rightarrow\quad\vert z_\alpha\vert\leq \max_{i=1,\ldots,n}\{\max[L_\z(x_i^2),L_\z(x_i^{2d})]\}.\]
Moreover the constraint $\M_{d-1}(g_{m+i}\,\z)\succeq0$ implies
$\M_{d-1}(g_{m+i}\,\z)(\ell,\ell)\geq0$ for all $\ell$, and so in particular, one obtains
$L_\z(x_i^{2k-2})\geq L_\z(x_i^{2k})$ for all $k=1,\ldots,d$ and all $i=1,\ldots,n$.
Hence $\vert z_\alpha\vert\leq \max_{i=1,\ldots,n}L_\z(x_i^2)$ for every $\alpha\in\N^n_{2d}$ with $\vert\alpha\vert>1$.
Therefore  in (\ref{equiv-1}) one may replace the constraint $\vert z_\alpha\vert\leq 1$ for all $\alpha\in\N^n_{d_0}\setminus\{0\}$ with
the $2n$ inequality constraints $\pm L_\z(x_i)\leq1$, $i=1,\ldots,n$, if $d_0=1$ and
the $3n$ inequality constraints:
\begin{equation}
\label{simplify}
\pm L_\z(x_i)\leq 1,\:L_\z(x_i^2)\leq1,\:i=1,\ldots,n\end{equation}
if $d_0>1$.
Consequently, (\ref{equiv-1}) is the same as the semidefinite program
\begin{equation}
\label{equiv-1111}
\left\{\begin{array}{ll}\displaystyle\max_{\z}&L_\z(f(0)-f))\\
\mbox{s.t.}&\M_d(\z),\,\M_{d-v_j}(g_j\,\z)\,\succeq\,0,\quad j=1,\ldots,m+n,\\
&\pm L_\z(x_i)\leq 1,\:L_\z(x_i^2)\leq1,\:i=1,\ldots,n.
\end{array}\right.\end{equation}
Let $\b^1=(b^1_i)$ (resp. $\b^2=(b^2_i)$) be the nonnegative vector of dual variables associated with the constraints
$L_\z(x_i)\leq 1$ (resp. $-L_\z(x_i)\leq 1$), $i=1,\ldots,n$. Similarly, let $\lambda_i$ be the dual variable associated with the constraint $L_z(x_i^2)\leq 1$. Then the dual of (\ref{equiv-1111}) is the semidefinite program:
\[\left\{\begin{array}{ll}\displaystyle\max_{\b^1,\b^2,\lambda}&\displaystyle\sum_{i=1}^n\left((b_i^1+b_i^2)+\lambda_i\right)\\
\mbox{s.t.}&f-f(0)+(\b^1-\b^2)'\x+\displaystyle\sum_{i=1}^n\lambda_i\, x_i^2\,\in\,Q_{d}(g)\\
&\b^1,\b^2,\lambda\geq0
\end{array}\right.\]
which is equivalent to (\ref{inv-3-ell1}).

(ii) Let $(\b,\lambda)$ be an optimal solution of (\ref{inv-3-ell1}), so that
\[f-f(0)+\b'\x+\displaystyle\sum_{i=1}^n\lambda_i\, x_i^2=\sigma_0+\sum_{j=1}^{m+n}\sigma_j\,g_j,\]
for some SOS polynomials $\sigma_j$. Evaluating at $\x=0$ yields
\[\sigma_0(0)=0;\quad \underbrace{\sigma_j(0)}_{\theta_j\geq0}\,g_j(0)=0,\quad j=1,\ldots, m+n.\]
Differentiating and evaluating at $\x=0$ and using that $\sigma_j$ is SOS and $\sigma_j(0)g_j(0)=0$, $j=1,\ldots,n+m$, yields:
\[\nabla f(0)+\b\,=\,\sum_{j=1}^{n+m}\sigma_j(0)\,\nabla g_j(0)\,=\,\sum_{j\in J(0)}\theta_j\,\nabla g_j(0),\]
which is the desired result.
\end{proof}
From the proof of Theorem \ref{th1-ell1}, this special form of $\tilde{f}_d$ is specific to the $\ell_1$-norm,
which yields the constraint $\vert z_\alpha\vert\leq1,\:\alpha\in\N^n_{d_0}\setminus\{0\}$ in the dual (\ref{dual-k=1}) and allows its simplification
(\ref{simplify}) thanks to a property of the moment matrix described in \cite{lass-netzer}. Observe that the canonical form (\ref{th1-ell1-1}) of $\tilde{f}_d$ is {\it sparse}
since $\tilde{f}_d$ differs from $f$ in at most $2n$ entries only (recall that $f$ has ${n+d_0\choose n}$ entries). This is another example of sparsity properties 
of optimal solutions of $\ell_1$-norm minimization problems, already observed in other contexts (e.g., in some compressed sensing applications). Moreover, it has the following consequence for nonlinear $0/1$ programs.

\begin{cor}
\label{0/1}
Let $\K=\{0,1\}^n$, $f\in\R[\x]_d$ and let $\y\in\K$ with 
$I_1:=\{i:y_i=0\}$ and $I_2:=\{i:y_i=1\}$.
Then an optimal solution $\tilde{f}_d\in\R[\x]_{d_0}$ of the inverse problem (\ref{inv-1}) for
the $\ell_1$-norm, is of the form
\begin{equation}
\label{form-01}
\tilde{f}_d(\x)\,=\,f(\x)+\b^T(\x-\y),\end{equation}
for some coefficient vector $\b\in\R^n$ such that $b_i\geq0$ if $i\in I_1$
and $b_i\leq0$ if $i\in I_2$, $i=1,\ldots,n$. Moreover, $\b$ is an optimal solution of the semidefinite program:
\[\begin{array}{ll}\displaystyle\min_{\b,\lambda,\gamma}&\displaystyle\sum_{i\in I_1}b_i-\sum_{i\in I_2}b_i\:(=\vert\b\vert)\\
\mbox{s.t.}&f(\bx)-f(\y)+\b^T (\bx-\y)=\sigma_0+\displaystyle\sum_{i=1}^n(x_i^2-x_i)\,\sigma_i\\
&b_i\geq0,\:i\in I_1;\:b_i\leq0,\:i\in I_2;\quad\sigma_0\in\Sigma[\x]_d;\:\sigma_i\in\R[\x]_{d-1},\,i=1,\ldots,n.
\end{array}\]
\end{cor}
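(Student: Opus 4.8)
The plan is to deduce Corollary \ref{0/1} from Theorem \ref{th1-ell1} by exploiting the defining idempotency relations $x_i^2=x_i$ on $\K=\{0,1\}^n$, which force the quadratic correction $\sum_i\lambda_i^*x_i^2$ in the canonical form (\ref{th1-ell1-1}) to collapse onto the linear terms. As a preliminary normalization I would send $\y$ to the origin by the affine reflection $x_i\mapsto 1-x_i$ for $i\in I_2$ and $x_i\mapsto x_i$ for $i\in I_1$. This map is an involution of $\{0,1\}^n$ carrying $\y$ to $0$, and since each polynomial $x_i^2-x_i$ is invariant under it, the equality version of $Q_d(g)$ (an s.o.s.\ weight on $1$ together with arbitrary multipliers on the $x_i^2-x_i$) is preserved; hence feasibility in (\ref{inv-1}) is unaffected and it suffices to treat $\y=0$.

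With $\y=0$ the constraint of (\ref{inv-1}) is $\tilde f-\tilde f(0)=\sigma_0+\sum_{i=1}^n(x_i^2-x_i)\sigma_i$ with $\sigma_0$ s.o.s.\ and $\sigma_i$ arbitrary, and its $\ell_1$-dual is the analogue of (\ref{equiv-1}) in which the localizing inequalities are replaced by the equalities $\M_{d-1}((x_i^2-x_i)\z)=0$. These equalities say exactly that $z_\alpha=z_{\bar\alpha}$, where $\bar\alpha_i:=\min(\alpha_i,1)$ is the multilinear reduction of $\alpha$. The crucial step, which plays here the role of the Lasserre--Netzer bound used in Theorem \ref{th1-ell1}, is the elementary estimate $0\le z_{\bar\alpha}\le z_{e_i}$ valid for every $i$ in the support of $\bar\alpha$: the lower bound is the diagonal entry $\M_d(\z)_{(\bar\alpha,\bar\alpha)}=z_{2\bar\alpha}=z_{\bar\alpha}\ge0$, and the upper bound comes from nonnegativity of the $2\times 2$ principal minor of $\M_d(\z)$ indexed by the two monomials $e_i$ and $\bar\alpha$, whose entries all reduce to $z_{e_i},z_{\bar\alpha},z_{\bar\alpha}$ and thus give $z_{\bar\alpha}(z_{e_i}-z_{\bar\alpha})\ge0$. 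Consequently all the ball constraints $|z_\alpha|\le1$, $\alpha\in\N^n_{d_0}\setminus\{0\}$, are implied by the $n$ one-sided constraints $z_{e_i}\le1$ (the reverse inequalities being slack because $z_{e_i}\ge0$), and the dual reduces to maximizing $L_\z(f(0)-f)$ over $\M_d(\z)\succeq0$, $\M_{d-1}((x_i^2-x_i)\z)=0$, and $z_{e_i}\le1$, $i=1,\dots,n$.

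Taking the semidefinite dual of this reduced program produces the announced form. The only ``difference'' multipliers are the $n$ scalars $b_i\ge0$ attached to the constraints $z_{e_i}\le1$, so every coefficient of $\tilde f_d$ except the linear ones is forced to equal that of $f$; with the constant handled as in Remark \ref{rem-f(0)} one gets $\tilde f_d=f+\sum_i b_i x_i$ with $b_i\ge0$ and objective $\sum_i b_i$. Undoing the reflection then rewrites this as $\tilde f_d=f+\b^T(\x-\y)$, converts $b_i\ge0$ into $b_i\le0$ for $i\in I_2$, and turns the objective into $\sum_{i\in I_1}b_i-\sum_{i\in I_2}b_i$; at the same time the certificate becomes $f-f(\y)+\b^T(\x-\y)=\sigma_0+\sum_i(x_i^2-x_i)\sigma_i$, which is precisely the constraint of the semidefinite program in the statement.

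I expect the main obstacle to be strong duality rather than the algebra above, because $\K=\{0,1\}^n$ has empty interior and so the Slater argument of Lemma \ref{nodualgap} does not apply verbatim: no $\z$ can make $\M_d(\z)$ positive definite once the idempotency equalities are imposed. To repair this I would produce a relative-interior point by taking $\mu$ to be a small multiple of the uniform measure on $\{0,1\}^n$; its truncated moment vector makes $\M_d(\z)$ positive definite modulo the idempotency ideal and satisfies $z_{e_i}<1$ strictly, which suffices to exclude a duality gap and to guarantee that the reduced program is attained. A secondary, purely bookkeeping point is to check that, under the reflection, the s.o.s.\ weight and the two equality multipliers on $x_i^2-x_i$ recombine into a single arbitrary $\sigma_i$, so that the reduced certificate has exactly the shape displayed in the corollary.
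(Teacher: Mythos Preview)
Your proposal is correct and follows the same dual--simplification--then--redualize template as the paper: reduce to $\y=0$, show that the idempotency relations force all the constraints $|z_\alpha|\le1$ in the dual to collapse onto $n$ one-sided linear constraints on the first-order moments, dualize back to obtain the linear canonical form, and close the duality gap with a scaled counting measure on the finite set. The two places where you diverge are both simplifications rather than new ideas. First, you normalize by the reflection $x_i\mapsto1-x_i$ on $I_2$ (keeping $\K=\{0,1\}^n$ and the uniform relations $x_i^2=x_i$), whereas the paper translates by $\u=\x-\y$ and works with the mixed relations $u_i^2=u_i$ on $I_1$ and $u_i^2=-u_i$ on $I_2$; since the two normalizations are related by the pure sign change $u_i=-x'_i$ for $i\in I_2$, which preserves the $\ell_1$-norm of coefficient vectors, they set up the \emph{same} instance of~(\ref{inv-1}) and your reduction is legitimate. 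Second, your explicit $2\times2$ principal-minor argument for $0\le z_{\bar\alpha}\le z_{e_i}$ is more self-contained than the paper's appeal to the Lasserre--Netzer bound \cite{lass-netzer}, but it plays exactly the same role in collapsing the ball constraints. Your treatment of strong duality via the (scaled) uniform measure on $\{0,1\}^n$ is also precisely what the paper does.
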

\begin{proof}
We briefly sketch the proof which 
is very similar to that of Theorem \ref{th1-ell1} even though $\K$ does not have a nonempty interior 
and $d_0$ is not required to be even. Recall that $\y\in\K$ could be assumed to be $0$ and so if $\K=\{0,1\}^n$
then the new feasible set after the change of variable $\u:=\x-\y$ is now
$\tilde{\K}=\prod_{i\in I_1}(\{0,1\})\prod_{i\in I_2}(\{-1,0\})$. Similarly, let $f'\in\R[\bx]_d$ be the polynomial $f$ in the new coordinates $\u$,
i.e., $f'(\u)=f(\u+\y)$.
So for every $\alpha\in\N^n$, define $\bar{\alpha}\in\{0,1\}^n$ and $\Delta_{\alpha}\subset I_2$ by: 
\begin{eqnarray*}
\bar{\alpha}_i&:=&1\mbox{ if $\alpha_i>0$ and $0$ otherwise, $i=1,\ldots,n$}\\ 
\Delta_{\alpha}&:=&\{i\in I_2:0<\alpha_i\mbox{ is even}\},
\end{eqnarray*}
and let $\vert\Delta_{\alpha}\vert$ denotes the cardinality of $\Delta_{\alpha}$. 
Then because of the boolean constraints $u_i^2=u_i$, $i\in I_1$ and $u_i^2=-u_i$, $i\in I_2$,
in the definition of $\tilde{\K}$,
(\ref{equiv-1}) reads
\[\left\{\begin{array}{ll}\displaystyle\max_{\z}&L_\z(f'(0)-f')\\
\mbox{s.t.}&\M_d(\z)\succeq\,0;\quad z_{\alpha}=(-1)^{\vert\Delta_{\alpha}\vert}\,z_{\bar{\alpha}},\quad\alpha\in\N^n_{2d}\\
&\vert z_{\alpha}\vert \leq 1,\quad\forall\,\alpha\in\N^n_{d_0}\setminus\{0\}.
\end{array}\right.\]
But this combined with $\M_d(\z)\succeq0$ implies that $\M_d(\z)$ 
can be simplified to a smaller real symmetric matrix $\overline{\M}_d(\z)$ with rows and columns indexed by only the square-free 
monomials $\x^{\bar{\alpha}}$, $\alpha\in\N^n_{2d}$.
Indeed, every column $\alpha$ of $\M_d(\z)$ is exactly identical to $\pm$ the column of $\M_d(\z)$  indexed by $\bar{\alpha}$.
Also, invoking \cite{lass-netzer} and with similar arguments,
$\vert z_{\alpha}\vert\leq \max\left[\max_{i\in I_1}L_\z(u_i),\,\max_{i\in I_2}-L_\z(u_i)\right]$ 
for all $\alpha$, and so (\ref{equiv-111}) is equivalent to:
\begin{equation}
\label{equiv-111}
\left\{\begin{array}{ll}\displaystyle\max_{\z}&L_\z(f'(0)-f')\\
\mbox{s.t.}&\overline{\M}_d(\z)\succeq\,0;\quad z_{\alpha}=(-1)^{\vert\Delta_{\alpha}\vert}\,z_{\bar{\alpha}},\quad\alpha\in\N^n_{2d}\\
&L_\z(\x_i)\,\leq\,1,\qquad i\in I_1\\
&-L_\z(\x_i)\,\leq\,1,\qquad i\in I_2.
\end{array}\right.\end{equation}
Finally, let $\mu$ be a Borel measure with support exactly $\tilde{\K}$, and scaled to satisfy
$\vert\int u_id\mu\vert <1$, $i=1,\ldots, n$. Its associated vector of moment $\z=(\int \u^{\alpha} d\mu)$, $\alpha\in\N^n$, is feasible in
(\ref{equiv-111}) and $\overline{\M}_d(\z)\succ0$. Hence Slater's condition holds for (\ref{equiv-111}), which in turn
implies that there is no duality gap with its dual which reads:
\[\begin{array}{ll}\displaystyle\min_{\b,\lambda,\gamma}&\displaystyle\sum_{i=1}^nb_i\\
\mbox{s.t.}&f'(\u)-f'(0)+\displaystyle\sum_{i\in I_1}b_iu_i-\displaystyle\sum_{i\in I_2}b_iu_i=\sigma_0(\u)\\
&+\displaystyle\sum_{i\in I_1}^n\sigma_i(\u)\,(u_i^2-u_i)
+\displaystyle\sum_{i\in I_2}^n\sigma_i(\u)\,(u_i^2+u_i)\\
&\b\geq0;\quad\sigma_0\in\Sigma[\u]_d;\:\sigma_i\in\R[\u]_{d-1},\,i=1,\ldots,n.
\end{array}\]
Moreover, the dual has an optimal solution because the optimal value 
is bounded below by zero. Recalling that $\u=\x-\y$ and $f'(\u)=f(\x)$, we retrieve the  semidefinite program of the Corollary
and so $\tilde{f}_d$ is indeed of the form (\ref{form-01}).
\end{proof}

\subsection{Structural constraints}
\label{structural}
It may happen that the initial criterion $f\in\R[\x]$ has some structure that one wishes to keep
in the inverse problem. For instance, in MAXCUT problems on $\K=\{-1,1\}^n$, $f$ is a quadratic form
$\x\mapsto \x' \A\x$ for some real symmetric matrix $\A$ associated with a graph $(V,E)$, where $\A_{ij}\neq0$ if 
and only if $(i,j)\in E$. Therefore, in the inverse optimization problem, one may wish that $\tilde{f}$ in (\ref{inv-1}) 
is also a quadratic form associated with the same graph $(V,E)$, so that $\tilde{f}(\x)=\x'\tilde{\A}\x$ with
$\tilde{\A}_{ij}=0$ for all $(i,j)\not\in E$.

So if $\Delta_f\subset\N^n_{d_0}$ denotes the subset of (structural) multi-indices for which $f$ and $\tilde{f}$ should have same coefficient,
then in (\ref{inv-1}) one includes the additional constraint $\tilde{f}_\alpha=f_\alpha$ for all $\alpha\in\Delta_f$. Notice that $0\in\Delta_f$ because $\tilde{f}_0=f_0$; see Remark \ref{rem-f(0)}.
For instance, with $\K$ as in (\ref{setk-1}) and $k=1$, (\ref{inv-3-k=1}) reads
 \begin{equation}
\label{inv-33-k=1}
\begin{array}{rl}
\rho_d^1:=\displaystyle\min_{\tilde{f},\lambda_\alpha,\Z_j}&\displaystyle\sum_{\alpha\in\N^n_{d_0}\setminus\{0\}}\lambda_\alpha\\
\mbox{s.t.}& \lambda_\alpha +\tilde{f}_\alpha\geq f_\alpha,\quad\forall\alpha\in\N^n_{d_0}\setminus\Delta_f\\
& \lambda_\alpha -\tilde{f}_\alpha\geq -f_\alpha,\quad\forall\alpha\in\N^n_{d_0}\setminus\Delta_f\\
&\la\Z_0,\B_\alpha\ra+\displaystyle\sum_{j=1}^{m+n} \la\Z_j,\C^j_\alpha\ra\,=\,\left\{\begin{array}{l}
f_\alpha,\:\mbox{ if }\alpha\in\Delta_f\setminus\{0\}\\
\tilde{f}_\alpha,\:\mbox{ if }\alpha\in\N^n_{d_0}\setminus \Delta_f\\
0,\:\mbox{ if $\alpha=0$ or $\vert\alpha\vert>d_0$}\end{array}\right.\\
&\Z_j\succeq0,\quad j=0,1,\ldots,m+n,\end{array}
\end{equation}
and its dual has the equivalent form,
\begin{equation}
\label{equiv-11}
\left\{\begin{array}{ll}\displaystyle\max_{\z}&L_\z(f(0)-f))\\
\mbox{s.t.}&\M_d(\z),\,\M_{d-v_j}(g_j\,\z)\,\succeq\,0,\quad j=1,\ldots,m+n,\\
&\vert z_\alpha\vert \leq 1,\quad\forall\,\alpha\in\N^n_{d_0}\setminus(\Delta_f\cup\{0\}).
\end{array}\right.\end{equation}

However now (\ref{inv-33-k=1}) may not have a feasible solution. In problems where $d_0$ is even and $\Delta_f$ does not contain 
the monomials $\alpha\in\N^n_{d_0}$ such that $\x^\alpha=x_i^2$, or $\x^\alpha=x_i^{d_0}$, $i=1,\ldots,n$, and if (\ref{inv-33-k=1}) has a feasible solution, then there is an optimal solution $\tilde{f}$ with still the special form described in Theorem \ref{th1-ell1}, but with $b_k=0$ if $\alpha=e_k\in\Delta_f$ (where all entries of
$e_k$ vanish except the one at position $k$).

\subsection{Asymptotics when $d\to\infty$}

We now relate $\P_d$, $d\in\N$, with the ideal inverse problem $\mathcal{P}$ in (\ref{inv-0}) when $d$ increases.

\begin{prop}
\label{prop-asymptotics}
Let $\K$ be as in (\ref{setk}) with nonempty interior.
For every $k=1,2,\infty$,  let %$\rho^k_d$ be as in (\ref{inv-1}), and let 
$\tilde{f}_d\in\R[\x]_{d_0}$ (resp. $\tilde{f}^*\in\R[\x]_{d_0}$) be an optimal solution of (\ref{inv-1}) (resp. (\ref{inv-0}))
with associated optimal value $\rho^k_d$ (resp. $\rho^k$).

The sequence $(\rho^k_d)$, $d\in\N$, is monotone nonincreasing and converges to $\hat{\rho}^k\geq\rho^k$.
Moreover, every accumulation point $\hat{f}\in\R[\x]_{d_0}$ of the sequence $(\tilde{f}_d)$, $d\in\N$, 
is such that $\hat{f}-\hat{f}(0)\in\psd_{d_0}(\K)$ and 
$\Vert \hat{f}-f\Vert_k=\hat{\rho}^k$. Finally, if
$\tilde{f}^*-\tilde{f}^*(0)$ is in $Q(g)$, then 
$\rho^k_d=\hat{\rho}^k=\rho^k$ for some $d$.
\end{prop}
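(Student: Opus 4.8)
The plan is to combine the nested structure of the Putinar cones with a compactness argument in the finite-dimensional space $\R[\x]_{d_0}$, and then to use that $Q(g)$ is the union of the truncated cones $Q_d(g)$ to obtain finite termination.

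First I would prove monotonicity and convergence. Passing from $d$ to $d+1$ only enlarges the admissible degrees $d-v_j$ of the s.o.s. weights, so $\Sigma[\x]_{d-v_j}\subseteq\Sigma[\x]_{d+1-v_j}$ and hence $Q^{d_0}_d(g)\subseteq Q^{d_0}_{d+1}(g)$. Thus the feasible set of $\P_d$ grows with $d$ and the minimum $\rho^k_d$ is nonincreasing; being bounded below by $0$ it converges to some $\hat{\rho}^k\geq0$. Since any Putinar certificate certifies nonnegativity on $\K$, one has $Q^{d_0}_d(g)\subseteq\psd_{d_0}(\K)$, so every $\tilde{f}$ feasible for $\P_d$ is feasible for the ideal problem $\mathcal{P}$; therefore $\rho^k\leq\rho^k_d$ for all $d$, and letting $d\to\infty$ gives $\rho^k\leq\hat{\rho}^k$.

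Next I would handle the accumulation points. Because $\Vert\tilde{f}_d-f\Vert_k=\rho^k_d$ is nonincreasing and finite, the sequence $(\tilde{f}_d)$ stays in a fixed $\ell_k$-ball of $\R[\x]_{d_0}$, which is compact, so accumulation points exist; let $\tilde{f}_{d_t}\to\hat{f}$ along a subsequence. For polynomials of fixed degree $d_0$ coefficient convergence entails pointwise convergence, so for each fixed $\x\in\K$ the inequality $\tilde{f}_{d_t}(\x)-\tilde{f}_{d_t}(0)\geq0$ (valid since $\tilde{f}_{d_t}-\tilde{f}_{d_t}(0)\in Q^{d_0}_{d_t}(g)\subseteq\psd_{d_0}(\K)$) passes to the limit, yielding $\hat{f}(\x)-\hat{f}(0)\geq0$; as $\x\in\K$ is arbitrary, $\hat{f}-\hat{f}(0)\in\psd_{d_0}(\K)$. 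Continuity of $\Vert\cdot\Vert_k$ together with $\rho^k_{d_t}\to\hat{\rho}^k$ then give $\Vert\hat{f}-f\Vert_k=\hat{\rho}^k$, exactly along the lines of the limiting argument in Theorem \ref{th-ideal}.

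Finally, for finite termination, I would use that $\tilde{f}^*-\tilde{f}^*(0)\in Q(g)$ means this polynomial admits s.o.s. weights of some finite degree, so it already lies in $Q_d(g)$ for some finite $d$; as it also belongs to $\R[\x]_{d_0}$, it lies in $Q_d(g)\cap\R[\x]_{d_0}=Q^{d_0}_d(g)$. For that $d$ the ideal optimizer $\tilde{f}^*$ is feasible for $\P_d$, whence $\rho^k_d\leq\Vert\tilde{f}^*-f\Vert_k=\rho^k$; combined with $\rho^k\leq\rho^k_d$ this forces $\rho^k_d=\rho^k$, and monotonicity propagates the equality to all larger indices, so $\hat{\rho}^k=\rho^k$ as well. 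I expect no serious obstacle, as all three parts are soft; the only points needing care are that the limit in the second part rests on coefficientwise (hence pointwise) convergence of fixed-degree polynomials, and that in the third part one must keep $\tilde{f}^*$ inside $\R[\x]_{d_0}$ so that it is genuinely feasible for $\P_d$ rather than for a mere relaxation.
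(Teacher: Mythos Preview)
Your proposal is correct and follows essentially the same approach as the paper's proof: nestedness of the truncated quadratic modules for monotonicity, a compactness argument in $\R[\x]_{d_0}$ combined with pointwise passage to the limit for the accumulation-point claim, and the observation that membership in $Q(g)$ means membership in some $Q^{d_0}_d(g)$ for finite termination. Your write-up is in fact slightly more explicit than the paper's (e.g., you spell out why $\rho^k\leq\rho^k_d$ and why equality propagates by monotonicity), but there is no substantive difference in strategy.
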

\begin{proof}
Observe that the sequence $(\tilde{f}_d)$, $d\in\N$, is contained in the ball 
$\{h\,:\,\Vert h-f\Vert_k\leq \rho^k_{d_0}\}\subset\R[\x]_{d_0}$,
for some $d_0\in\N$. So let $\hat{f}$ be an accumulation point of $(\tilde{f}_d)$.
Since $\tilde{f}_d-\tilde{f}(0)\geq0$ on $\K$ for all $d$, a simple continuity argument
yields $\hat{f}-\hat{f}(0)\geq0$ on $\K$, i.e., $\hat{f}-\hat{f}(0)\in\psd_{d_0}(\K)$. Moreover, the sequence $(\rho^k_d)$ is obviously monotone nonincreasing
and bounded below by zero. Hence $\lim_{d\to\infty}\rho^k_d=:\hat{\rho}^k\geq\rho^k$, 
and by continuity $\Vert\hat{f}-f\Vert_k=\hat{\rho}^k$.

Finally, if $\tilde{f}^*-\tilde{f}(0)\in Q(g)$ then $\tilde{f}^*-\tilde{f}(0)\in Q^{d_0}_{d}(g)$ for some $d$, and so
$\tilde{f^*}$ is a feasible solution of (\ref{inv-1}) but with value $\rho^k\leq\rho^k_d$.
Therefore, we conclude that $\tilde{f}^*$ is an optimal solution of (\ref{inv-1}).
\end{proof}

Proposition \ref{prop-asymptotics} relates $\rho^k_d$ and $\rho^k$ in a strong sense when
$\tilde{f}^*-\tilde{f}(0)\in Q(g)$. However, we would like to know how restrictive is the constraint 
$\tilde{f}^*-\tilde{f}(0)\in Q(g)$ compared to $\tilde{f}^*-\tilde{f}(0)\in \psd_{d_0}(\K)$. Indeed,
even though $\psd_{d_0}(\K)={\rm cl}\,(\cup_{\ell=0}^\infty)Q^{d_0}_\ell$ when $\K$ satisfies the
assumptions of Theorem \ref{limit}, in general
an approximating sequence $(f_\ell)\subset Q(g)$, $\ell\in\N$ (with $\Vert f_\ell-\tilde{f}^*\Vert_k\to 0$), does
not have the property that
$f_\ell(\x)-f_\ell(0)\geq0$ for all $\x$ on $\K$. 

\subsection{$Q(g)$ versus $\psd_{d_0}(\K)$}

Therefore the question is: {\em How often a polynomial
nonnegative on $\K$ (and with at least one zero in $\K$) is an element of $Q(g)$?}
This question can be answered in a number of cases which suggest that
$f\geq0$ on $\K$ and $f\not\in Q(g)$ can be true in very specific cases only (at least when $\K$ is compact
and $Q(g)$ is Archimedean).
Indeed $f\geq0$ on $\K$ implies $f\in Q(g)$ whenever:
\begin{itemize}
\item $f$ and $-g_j$ are convex, $j=1,\ldots,n$,
Slater's condition holds and $\nabla^2f(\x^*)\succ0$ at the unique global minimizer $\x^*\in\K$; see e.g.,
de Klerk and Laurent \cite{deklerk-laurent}.
\item $\K\subseteq\{0,1\}^n$, i.e., for 0/1 polynomial programs, and more generally for
all discrete polynomial optimization problems.
\item $Q(g)$ is Archimedean, $f$ has finitely many zeros in $\K$ and the {\em Boundary Hessian Condition} (BHC) holds 
at every zero of $f$ in $\K$; see e.g. Marshall \cite{marshall2}. That is, the BHC holds at a zero $\x^*\in\K$
of $f$ if there exists $0\leq k\leq n$, and some $1\leq v_1\leq \cdots\leq v_k\leq m$, such that:
 $g_{v_1},\ldots, g_{v_k}$ are part of a system of local parameters at $\x^*$, and  the standard sufficient conditions for $\x^*$ to be local 
 minimizer of $f$ on $\{\x:g_{v_j}(\x)\geq0,\:j=1,\ldots,k\}$ hold. Equivalently, if 
 $(t_1,\ldots,t_n)$ are local parameters at $\x^*$ with $t_j=g_{v_j}$, $j=1,\ldots,k$,
then $f$ can be written as a formal power series $f_0+f_1+f_2\ldots$ in $\R[[t_1,\ldots,t_n]]$, where
each $f_j$ is an homogeneous form of degree $j$, 
 \[f_1\,=\,a_1\,t_1+\cdots +a_k\,t_k\quad\mbox{ with }a_i>0,\quad i=1,\ldots,k,\]
and the quadratic form $f_2(\underbrace{0,\ldots, 0}_{k \mbox{ times}},t_{k+1},\ldots,t_n)$ is positive definite.

For instance, if all the (finitely many) zeros $\x^*$ of $f$  are in the interior of $\K$, one may take
$k=0$ and $t_i(\x)=x_i-x^*_i$ for all $i$. Then $f\in Q(g)$ if $f_2$ is positive definite.
\end{itemize}
It turns out that under a technical condition on the polynomials that define $\K$, the BHC holds generically, i.e.,
the set of polynomials $f\in\R[\x]_d$ for which the BHC holds at every global minimizer on $\K$,
is dense in $\R[\x]_d$; see Marshall \cite[Corollary 4.5]{marshall2}. Finally, and in the same vein,
a recent result of Nie \cite{nie} states that if 
the standard constraint qualification, strict complementarity and second-order sufficiency condition hold
at every global minimizer of $f$ on $\K$, then $f-f^*\in Q(g)$. Moreover this property is also generic in the sense that
it does not hold only if the coefficients of the polynomials $f$ and $g_j$, $j=1,\ldots,m$,
satisfy a set of polynomial equations! In other words the three conditions
hold in a Zariski open set;  for more details see Nie \cite[Theorem 1.1, Theorem 1.2 and \S 4.2]{nie}. 

So in view of Proposition \ref{prop-asymptotics}, one may expect that $\lim_{d\to\infty}\hat{\rho}^k_d=\rho^k$ generically. On the other hand,
given $d\in\N$, identifying whether  $\rho^k_d=\rho^k$ (or whether $\vert \rho^k_d-\rho^k\vert<\epsilon$ for some given $\epsilon>0$, fixed) is a open issue. For instance, $\rho^k_d=\rho^k_{d+1}$ (as is the case in Examples \ref{newex1} and \ref{newex2} below) is not a guarantee that $\rho^k_d=\rho^k$.

We will also see in Section \S \ref{section-epsilon} that one may also approach 
as closely as desired an optimal  solution of the ideal inverse problem (\ref{inv-0}) by asking $\y$ to be only 
a global $\epsilon$-minimizer (with $\epsilon>0$ fixed), provided that $\epsilon$ is small enough.

In addition, more can be said by looking at the dual of (\ref{inv-0}).

\subsection*{The dual of the ideal inverse problem $\mathcal{P}$}

We now provide an explicit interpretation of the dual problems $\P^*_d$ in 
(\ref{dual-k=1})-(\ref{dual-k=infty}).
Let $M(\K)$ be the space of finite Borel measures on $\K$.
Then obviously (\ref{dual-k=1}) is a relaxation of the following problem:
\begin{equation}
\label{newdual-k=1}
\left\{\begin{array}{ll}
\br^1=\displaystyle\max_{\mu\in M(\K)}&\displaystyle\int_\K (f(0)-f(\x))\,d\mu(\x)\\
\mbox{s.t.}&\pm\displaystyle\int_\K \x^\alpha d\mu(\x)\,\le\,1,\quad \forall\,\alpha\in\N^n_{d_0}\setminus\{0\},
\end{array}\right.
\end{equation}
which, denoting by $\delta_0$ the Dirac measure at $\x=0$, and by $P(\K)$ the space of Borel probability measures on $\K$, 
can be rewritten as
\[\left\{\begin{array}{rl}
\displaystyle\max_{\nu\in P(\K),\gamma\geq0}&\gamma\, (\delta_0(f)-\nu(f))\\
\mbox{s.t.}&
\pm\gamma\,\displaystyle (\nu(\x^\alpha)-\delta_0(\x^\alpha))\,\le\,1,\quad \forall\,\alpha\in\N^n_{d_0};\quad\nu(\K)=\gamma.
\end{array}\right.\]
Similarly, (\ref{dual-k=infty}) is a relaxation of the following problem:
\begin{equation}
\label{newdual-k=infty}
\br^\infty=\displaystyle\max_{\mu\in M(\K)}\,\left\{\displaystyle\int_\K (f(0)-f(\x))\,d\mu(\x)\::\:
\displaystyle\sum_{\alpha\in\N^n_{d_0}\setminus\{0\}}\left\vert\int_\K \x^\alpha d\mu\right\vert\,\le\,1\,\right\},
\end{equation}
or, again, equivalently,
\[\displaystyle\max_{\nu\in P(\K),\gamma\geq0}\,\left\{\gamma\,(\delta_0(f)-\nu(f))\::\:
\gamma\,\displaystyle\sum_{\alpha\in\N^n_{d_0}}\left\vert\nu(\x^\alpha)-\delta_0(\x^\alpha)\right\vert\,\le\,1;
\quad\nu(\K)=\gamma\,\right\}.\]
Hence, in the dual problems (\ref{newdual-k=1}) and (\ref{newdual-k=infty}) 
one searches for a finite Borel measure $\mu$ 
which concentrates as much as possible on the set $\{\x\in\K\,:\,f(\x)\leq f(0)\}$, and such
that  its moments up to order $d_0$ are not too far from those of a measure supported at $\{0\}\in\K$. 

In fact, and as one might have expected, (\ref{newdual-k=1}) (resp. 
(\ref{newdual-k=infty})) is the dual of $\mathcal{P} $ in (\ref{inv-0}) with $k=1$ (resp. with $k=\infty$).
For instance, with $k=1$, to see that weak duality holds,
let $\tilde{f}\in\R[\x]_{d_0}$  and $\mu\in M(\K)$ be
an arbitrary feasible solution of (\ref{inv-0}) and (\ref{newdual-k=1}), respectively. Then:
\begin{eqnarray*}\displaystyle\int_\K (f(0)-f)\,d\mu&=&
\underbrace{\displaystyle\int_\K (\tilde{f}(0)-\tilde{f})d\mu}_{\leq 0}+\int_\K (\tilde{f}-f)\,d\mu\\
&\leq&\displaystyle\sum_{\alpha\in\N^n_{d_0}\setminus\{0\}}\vert \tilde{f}_\alpha-f_\alpha\vert\cdot\left\vert\int_\K \x^\alpha\,d\mu(\x)\right\vert
\leq\displaystyle\Vert \tilde{f}-f\Vert_1,
\end{eqnarray*}
i.e., weak duality holds and $\br_1\leq\rho^1$.  We even have the following:

\begin{lem}
\label{lem-asymp}
Let $\K$ in (\ref{setk}) be with nonempty interior and assume that 
$Q(g)$ is Archimedean. Let $\rho^k$ be as in (\ref{inv-0}) with $k=1,\infty$, and
let $\z^d=(z^d_\alpha)\in\R^{s(d)}$ be a nearly optimal solution of (\ref{dual-k=1})
(or (\ref{equiv-1})), e.g., with value
$L_\z(f(0)-f)\geq \rho^k_d-1/d$, for all $d\in\N$. 

If $\displaystyle\liminf_{d\to\infty}z^d_0<\infty$ 
then $\displaystyle\lim_{d\to\infty}\rho^k_d=\rho^k$ and (\ref{newdual-k=1}) has an optimal solution
$\mu^*\in M(\K)$ which is supported on the set of global minimizers on $\K$ of the optimal solution $\tilde{f}^*\in\R[\x]_{d_0}$ of
(\ref{inv-0}) (which contains $\{0\}$). Hence either $\rho^k=0$ in which case $0$ is an optimal solution of $\P$, or
$\rho^k>0$ and $\tilde{f}^*$ has a another global minimizer $\tilde{\x}\neq0$ on $\K$ with $f(\tilde{\x})<f(0)$.
\end{lem}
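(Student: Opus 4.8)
The plan is to pass to the limit in the dual semidefinite programs: from the near-optimal solutions $\z^d$ I extract a limiting moment sequence, realize it as a measure $\mu^*$ on $\K$ via Putinar's theorem, and then use strong duality to show $\mu^*$ is optimal for (\ref{newdual-k=1}) with value $\rho^k$; the support statement will then fall out of complementary slackness. I treat $k=1$, the case $k=\infty$ being identical with (\ref{dual-k=infty}) and (\ref{newdual-k=infty}) in place of (\ref{dual-k=1}) and (\ref{newdual-k=1}).

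First I would establish boundedness and extract a limit. The constraints $\vert z^d_\alpha\vert\le1$ for $\alpha\in\N^n_{d_0}\setminus\{0\}$ control the low-order moments, while the hypothesis $\liminf_{d\to\infty}z^d_0<\infty$ yields a subsequence $d_t\to\infty$ with $z^{d_t}_0\to c<\infty$. Since $Q(g)$ is Archimedean, fix $M$ and an expression $M-\Vert\x\Vert^2=\sigma_0+\sum_j\sigma_j g_j$ with the $\sigma_j$ sums of squares. Applying $L_{\z^d}$ to $(M-\Vert\x\Vert^2)\,\x^{2\beta}$ (legitimate once $d$ exceeds the degrees involved, since $\M_d(\z^d)\succeq0$ and $\M_{d-v_j}(g_j\,\z^d)\succeq0$) gives $L_{\z^d}(x_i^2\,\x^{2\beta})\le M\,L_{\z^d}(\x^{2\beta})$; iterating bounds the diagonal moments by $M^{\vert\gamma\vert}z^d_0$, and the $2\times2$ minors of $\M_d(\z^d)$ then yield a uniform bound $\vert z^d_\alpha\vert\le C_\alpha\,z^d_0$ for each fixed $\alpha$. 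Hence along the subsequence every $(z^{d_t}_\alpha)$ is bounded, and a diagonal extraction produces $z^{d_t}_\alpha\to z^*_\alpha$ for all $\alpha\in\N^n$. The limit satisfies $\M_d(\z^*)\succeq0$ and $\M_{d-v_j}(g_j\,\z^*)\succeq0$ for every $d$, so by the moment form of Putinar's theorem (the dual of Theorem \ref{th-put}, using that $Q(g)$ is Archimedean) $\z^*$ is the moment sequence of a finite Borel measure $\mu^*$ on $\K$; passing to the limit in $\vert z^d_\alpha\vert\le1$ shows $\mu^*$ is feasible for (\ref{newdual-k=1}). I expect this realization step---the uniform bound together with the representing-measure conclusion---to be the main obstacle.

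Next I would close the duality chain. Since $f(0)$ is the constant term $f_0$ of $f$, the objective $L_{\z}(f(0)-f)=-\sum_{\alpha\ne0}f_\alpha z_\alpha$ depends only on the moments $\{z_\alpha:\alpha\ne0\}$, all bounded by $1$; hence $\int_\K(f(0)-f)\,d\mu^*=L_{\z^*}(f(0)-f)=\lim_t L_{\z^{d_t}}(f(0)-f)\ge\hat\rho^k$, where $\hat\rho^k=\lim_d\rho^k_d$ by Proposition \ref{prop-asymptotics}. Combining this with weak duality $\br^k\le\rho^k\le\hat\rho^k$ and with feasibility of $\mu^*$ (whose value is $\le\br^k$) forces the chain $\hat\rho^k\le\int_\K(f(0)-f)\,d\mu^*\le\br^k\le\rho^k\le\hat\rho^k$ to collapse to equalities, which proves $\lim_d\rho^k_d=\rho^k$ and that $\mu^*$ is optimal. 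For the support, let $\tilde{f}^*$ be optimal for (\ref{inv-0}), where we may take $\tilde{f}^*(0)=f(0)$ since $\tilde{f}^*_0$ does not enter the constraint of (\ref{inv-0}) (cf. Remark \ref{rem-f(0)}), and write
\[
\rho^k=\int_\K(f(0)-f)\,d\mu^*=\int_\K(\tilde{f}^*(0)-\tilde{f}^*)\,d\mu^*+\int_\K(\tilde{f}^*-f)\,d\mu^*.
\]
The first term is $\le0$ because $\tilde{f}^*-\tilde{f}^*(0)\in\psd_{d_0}(\K)$ and $\mu^*\ge0$, while the second is $\le\sum_{\alpha\ne0}\vert\tilde{f}^*_\alpha-f_\alpha\vert=\Vert\tilde{f}^*-f\Vert_1=\rho^k$ since $\vert\int_\K\x^\alpha d\mu^*\vert\le1$. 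As the two terms sum to $\rho^k$, both bounds are tight; in particular $\int_\K(\tilde{f}^*-\tilde{f}^*(0))\,d\mu^*=0$, and as the integrand is nonnegative on $\K$, $\mu^*$ is supported on $\{\x\in\K:\tilde{f}^*(\x)=\tilde{f}^*(0)\}$, the set of global minimizers of $\tilde{f}^*$ on $\K$, which contains $0$ because $\tilde{f}^*\ge\tilde{f}^*(0)$ on $\K$.

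Finally, the dichotomy follows. If $\rho^k=0$ then $\tilde{f}^*=f$ and Theorem \ref{th-ideal} gives that $0$ solves $\P$. If $\rho^k>0$, then $\int_\K(f(0)-f)\,d\mu^*=\rho^k>0$ forces $\mu^*$ to assign positive mass to $\{\x:f(\x)<f(0)\}$; any point $\tilde{\x}$ there lies in ${\rm supp}\,\mu^*$, hence is a global minimizer of $\tilde{f}^*$, and satisfies $f(\tilde{\x})<f(0)$, whence $\tilde{\x}\ne0$.
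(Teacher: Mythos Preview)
Your proof is correct and follows essentially the same route as the paper's: extract a limiting moment sequence via the Archimedean bound on $z^d_0$, realize it as a measure on $\K$ by Putinar, collapse the chain $\hat\rho^k\le\int(f(0)-f)\,d\mu^*\le\br^k\le\rho^k\le\hat\rho^k$, and derive the support statement from the resulting equality $\int(\tilde f^*-\tilde f^*(0))\,d\mu^*=0$. The only difference is that you spell out the moment-boundedness argument explicitly, whereas the paper defers that step to \cite[Theorem 3.2]{lassjogo}.
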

\begin{proof}
The proof for the case $k=\infty$ is omitted as very similar to that of the case $k=1$.
Consider the subsequence $d_i$, $i\in\N$, such that $\displaystyle\liminf_{d\to\infty}z^d_0=\displaystyle\lim_{i\to\infty} z^{d_i}_0 <\infty$. Using the Archimedean property (\ref{archimedean}) of $Q(g)$, we proceed exactly as in the proof of Theorem 3.2 in \cite[p. 57--59 ]{lassjogo}.
There is a infinite sequence $\z^*=(z^*_\alpha)$, $\alpha\in\N^n$, and a subsequence (still denoted
$d_i$ for notational convenience), such that for every $\alpha\in\N^n$, $z^{d_i}_\alpha\to z^*_\alpha$.
Moreover, from the convergence $\z^{d_i}\to\z^*$, $\M_d(g_j\,\z^*)\succeq0$ for every $d\in\N$ and every
$ j=0,1,\ldots,m$; hence by Putinar's Theorem \cite{putinar}, $\z^*$ is the sequence of moments of a finite Borel measure $\mu^*$ supported on $\K$. Moreover,
since $\vert z^{d_i}_\alpha\vert\leq 1$ for all $\alpha\in\N^n_{d_0}\setminus\{0\}$, we obtain
\[\vert\int_\K\x^\alpha\,d\mu^*\vert\,=\,\vert z^*_\alpha\vert
=\lim_{i\to\infty}\vert z^{d_i}_\alpha\vert\,\leq \,1\quad\forall \alpha\in\N^n_{d_0}\setminus\{0\},\]
which proves that $\mu^*$ is feasible for (\ref{newdual-k=1}). Finally, by monotonicity of the
sequence $(\rho^1_d)$, $d\in\N$, and using $\rho^1_d\geq\rho^1\geq\br^1$ for all $d$,
\begin{eqnarray*}
\br^1\leq\rho^1\leq\lim_{d\to\infty}\rho^1_{d}&=&\lim_{i\to\infty}\rho^1_{d_i}\,=\,\lim_{i\to\infty}L_{\z^{d_i}}(f(0)-f)\\
&=&L_{\z^*}(f(0)-f)=\int_\K(f(0)-f)d\mu^*,\end{eqnarray*}
which proves that $\mu^*$ is an optimal solution of (\ref{newdual-k=1}), and so
$\br_1=\rho^1$.

Finally, since $\tilde{f}^*(0)=f(0)$,
\begin{eqnarray*}
\rho^1=\displaystyle\int_\K (f(0)-f)\,d\mu^*&=&\underbrace{\displaystyle\int_\K (\tilde{f}^*(0)-\tilde{f}^*)\,d\mu^*}_{\leq 0}
+\int_\K (\tilde{f}^*-f)\,d\mu^*\\
&\leq&\displaystyle\Vert \tilde{f}^*-f\Vert_1=\rho^1,
\end{eqnarray*}
which implies $\mu^*(\{\x:\,\tilde{f}^*(\x)-\tilde{f}^*(0)>0\})=0$, that is,
the support of $\mu^*$ is contained in the set of global minimizers of $\tilde{f}^*$ (which contains $\{0\}$).
Therefore, if $\rho^1>0$ then necessarily there is another global minimizer $0\neq\tilde{\x}\in\K$ of $\tilde{f}^*$
with $f(\tilde{\x})<f(0)$, otherwise $\rho^1=\int (f(0)-f)d\mu^*=0$.
\end{proof}

\subsection{Convexity}
One may wish to restrict to search for convex polynomials $\tilde{f}\in\R[\x]_{d_0}$ (no matter if $f$ itself is convex).
For instance if the $g_j$'s are concave (so that $\K$ is convex) but $f$ is not, one may wish to find the convex optimization problem
whose $\y\in\K$ is an optimal solution and with convex polynomial criterion $\tilde{f}\in\R[\x]_{d_0}$ closest to $f$.

If $d_0>2$ then in (\ref{inv-1}) it suffices to add the additional Putinar's certificate
\begin{equation}
\label{convex}
(\x,\u)\mapsto\:\u^T\nabla^2\tilde{f}(\x)\,\u\,=\,\sum_{j=0}^m\psi_j(\x,\u)\,g_j(\x)+\psi_{m+1}(\x,\u)(1-\Vert\u\Vert^2),\end{equation}
with $\psi_{m+1}\in\R[\x,\u]$ and 
$\psi_j\in\Sigma_{d-v_j}[\x,\u]$, for all $j=0,1,\ldots,m$. Indeed, (\ref{convex}) is a Putinar's certificate
of convexity for $\tilde{f}$ on $\K$, with degree bound $d$. As the coefficients of
the polynomial $(\x,\u)\mapsto\:\u^T\nabla^2\tilde{f}(\x)\u$ are linear in the coefficients of $\tilde{f}$,
(\ref{convex}) will translate into additional semidefinite constraints in (\ref{inv-3-k=1}).

If $d_0\leq2$, i.e. if $f(\x)=\frac{1}{2}\x^T\A\x+\b^T\x+c$ for some real symmetric matrix $\A\in\R^{n\times n}$, some
vector $\b\in\R^n$ and some scalar $c\in\R$, then
$\tilde{f}(\x)=\frac{1}{2}\x^T\tilde{\A}\x+\tilde{\b}^T\x+\tilde{c}$ for some real symmetric matrix 
$\tilde{\A}\in\R^{n\times n}$, some $\tilde{\b}\in\R^n$ and some $\tilde{c}\in\R$.
In that case, in (\ref{inv-1}) it suffices to add 
constraint $\nabla^2\tilde{f}(\x)=\tilde{\A}\succeq0$,
which is just a Linear Matrix Inequality (LMI).
And therefore, again, (\ref{inv-1}) can be rewritten
as a semidefinite program, namely (\ref{inv-3-k=1})-(\ref{inv-3-k=2}) with the additional LMI constraint 
$\tilde{\A}\succeq0$.

%If $f$ is convex and 
Notice that for $k=1,2$, it also makes sense to search for 
$\tilde{f}\in\R[\x]_2$ even if $f$ has degree $d_0>2$, i.e.,
if $f(\x)=c+\b^T\x+\frac{1}{2}\x^T\A\x +h(\x)$ where $h\in\R[\x]$ does not contains monomials
of degree smaller than $3$. This means that one searches for the convex program with quadratic cost
closest to $f$. 

So for instance, in the case where one searches for $\tilde{f}\in\R[\x]_2$,
and given $\y\in\K$ let 
$J(\y):=\{j\in\{1,\ldots,m\}\,:\,g_j(\y)=0\}$ be the set of constraints that are active at $\y$.
If the $g_j$'s that define $\K$ are concave %and 0's condition holds for $\K$,
then one may simplify (\ref{inv-1}). Writing $\tilde{f}=\frac{1}{2}\x^T\tilde{\A}\x+\tilde{\b}^T\x+\tilde{c}$, and with $k=1,2$,
(\ref{inv-1}) now reads:

\[\begin{array}{rl}
\rho^k:=\displaystyle\min_{\tilde{\A},\tilde{\b},\lambda}&\Vert f-\tilde{f}\Vert_k\\
\mbox{s.t.}& \tilde{\A}\,\y+\tilde{\b}\,=\,\displaystyle\sum_{j\in J(\y)}\lambda_j\,\nabla g_j(\y)\\
&\tilde{\A}\succeq0;\:\lambda_j\geq0,\quad j\in J(\y).
\end{array}\]
So, as we did in the previous section, and possibly after the change of variable $\x':=\x-\y$,
with no loss of generality one may and will assume that  $\y=0$, in which case (\ref{inv-4}) simplifies to
\begin{equation}
\label{inv-4}
\begin{array}{rl}\rho^k:=\displaystyle\min_{\tilde{\A},\tilde{\b},\lambda}&\Vert f-\tilde{f}\Vert_k\\
\mbox{s.t.}& \tilde{\b}\,=\,\displaystyle\sum_{j\in J(0)}\lambda_j\,\nabla g_j(0)\\
&\tilde{\A}\succeq0;\:\lambda_j\geq0,\quad j\in J(0),
\end{array}\end{equation}
which in turn simplifies to
\begin{equation}
\label{inv-44}
%\left\{
\begin{array}{rcl}
\rho^1&=&\displaystyle\min_{\tilde{\A}\succeq0}\Vert \tilde{\A}-\A\Vert_1+
\displaystyle\min_{\lambda\geq0}\Vert \b-\sum_{j\in J(0)}\lambda_j\nabla g_j(0)\Vert_1\\
\rho^\infty&=&\sup\left[\displaystyle\min_{\tilde{\A}\succeq0}\Vert \tilde{\A}-\A\Vert_\infty\:,\:
\displaystyle\min_{\lambda\geq0}\Vert \b-\sum_{j\in J(0)}\lambda_j\nabla g_j(0)\Vert_\infty\,\right].
\end{array}
%\right.
\end{equation}
 
Observe that (\ref{inv-44}) can be solved in two steps. One first 
solves the problem $\min_{\lambda\geq0}\Vert \b-\sum_{j\in J(0)}\lambda_j\nabla g_j(0)\Vert_k$,
which is a linear program with finite value, hence with an optimal solution.
One next solves the problem $\min_{\tilde{\A}\succeq0}\Vert \tilde{\A}-\A\Vert_k$
which computes the $\ell_k$-projection of $\A$ onto the closed convex cone of positive semidefinite matrices
(a semidefinite program with an optimal solution).

\begin{lem}
\label{lem-convexity}
Let $\K\subset\R^n$ be as in (\ref{setk}) with $g_j$ being concave for every $j=1,\ldots,m$.
%0's conditionholds for $\K$. 
Then (\ref{inv-4}) has an optimal solution 
$\tilde{f}^*\in\R[\x]_{2}$ and $0$ is an optimal solution of the convex optimization problem
$\P':\min\{\tilde{f}^*(\x)\,:\,\x\in\K\}$.
\end{lem}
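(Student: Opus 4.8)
The plan is to establish the two assertions of Lemma \ref{lem-convexity} separately: first that the convex program (\ref{inv-4}) attains its optimum, and then that the point $0$ is indeed a global minimizer of the resulting convex problem $\P'$. For existence I would exploit the decoupling already recorded in (\ref{inv-44}). After setting $\tilde{c}=f(0)$ (optimal by the reasoning of Remark \ref{rem-f(0)}), the objective $\Vert f-\tilde{f}\Vert_k$ separates over the coefficient blocks of $\tilde{f}$: the fixed degree-$\geq3$ block of $f$, the quadratic block governed by $\tilde{\A}$, and the linear block governed by $\tilde{\b}$. Consequently (\ref{inv-4}) splits into the linear-part problem $\min_{\lambda\geq0}\Vert\b-\sum_{j\in J(0)}\lambda_j\nabla g_j(0)\Vert_k$ and the quadratic-part problem $\min_{\tilde{\A}\succeq0}\Vert\tilde{\A}-\A\Vert_k$. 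The first is a nearest-point projection onto the finitely generated (hence closed) convex cone $\{\sum_{j\in J(0)}\lambda_j\nabla g_j(0):\lambda\geq0\}$, and the second is a projection onto the closed PSD cone; each is the minimization of a continuous coercive function over a closed set in finite dimension, so each attains its minimum. Concatenating the two minimizers with $\tilde{c}=f(0)$ produces an optimal $\tilde{f}^*\in\R[\x]_2$ for (\ref{inv-4}).

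The second assertion is where the concavity hypothesis does the work. Write $\tilde{f}^*=\tfrac12\x^T\tilde{\A}^*\x+(\tilde{\b}^*)^T\x+\tilde{c}^*$ with $\tilde{\A}^*\succeq0$, so $\tilde{f}^*$ is convex and $\K$ is convex (each $g_j$ being concave). Since $\tilde{f}^*$ is convex with $\nabla\tilde{f}^*(0)=\tilde{\b}^*$, the supporting-hyperplane inequality gives, for every $\x$,
\[\tilde{f}^*(\x)\,\geq\,\tilde{f}^*(0)+(\tilde{\b}^*)^T\x.\]
It therefore suffices to prove $(\tilde{\b}^*)^T\x\geq0$ for all $\x\in\K$. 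By feasibility of (\ref{inv-4}) one has $\tilde{\b}^*=\sum_{j\in J(0)}\lambda_j\nabla g_j(0)$ with $\lambda_j\geq0$. For each active index $j\in J(0)$, $g_j(0)=0$, and concavity of $g_j$ gives the tangent majorization $g_j(\x)\leq g_j(0)+\nabla g_j(0)^T\x=\nabla g_j(0)^T\x$; hence for $\x\in\K$,
\[\nabla g_j(0)^T\x\,\geq\,g_j(\x)\,\geq\,0.\]
Multiplying by $\lambda_j\geq0$ and summing over $j\in J(0)$ yields $(\tilde{\b}^*)^T\x\geq0$, so $\tilde{f}^*(\x)\geq\tilde{f}^*(0)$ on $\K$, i.e.\ $0$ is a global minimizer of $\P'$.

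The only genuinely structural step is this last one, and its success hinges entirely on concavity of the $g_j$: concavity is what makes each tangent functional $\nabla g_j(0)^T\x$ an upper bound for $g_j(\x)$, so that nonnegativity of $g_j$ on $\K$ forces nonnegativity of that functional, with signs matching $\lambda_j\geq0$. I expect this to be the crux rather than an obstacle, because \emph{no constraint qualification is required}: the feasibility constraint $\tilde{\b}=\sum_{j\in J(0)}\lambda_j\nabla g_j(0)$ already encodes one half of the KKT system, and I only invoke the easy direction that such a representation is \emph{sufficient} for global optimality in the convex setting. The remaining care points are bookkeeping: confirming that the norm decouples across the coefficient blocks for each $k=1,2,\infty$ (for $k=\infty$ the decoupling takes the $\max$ form of (\ref{inv-44})), and noting that compactness of $\K$ is not needed, since coercivity of the norm on the degree-$\leq2$ coefficients already secures attainment.
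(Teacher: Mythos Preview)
Your proof is correct and essentially matches the paper's. For existence you both invoke the decoupling (\ref{inv-44}); for global optimality the paper bundles the argument into the convex Lagrangian $L=\tilde{f}-\sum_{j\in J(0)}\lambda_j g_j$ with $\nabla L(0)=0$ (whence $\tilde{f}\geq L\geq L(0)=\tilde{f}(0)$ on $\K$), which is the same KKT-sufficiency reasoning you carry out via the two separate tangent inequalities.
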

\begin{proof}
Let $(\tilde{f},\lambda)$ (with $\tilde{f}\in\R[\x]_{2}$) be any feasible solution of (\ref{inv-4}).
The constraint in (\ref{inv-4}) states that $\nabla L(0)=0$, where $L\in\R[\x]$
is the Lagrangian polynomial $\x\mapsto L(\x):=\tilde{f}(\x)-\sum_{j\in J(0)}\lambda_j\,g_j(\x)$, which 
is convex on $\K$ because the $g_j$'s are concave, the $\lambda_j$'s are nonnegative, and
$\tilde{f}$ is convex. Therefore $\nabla L(0)=0$ implies that
$0$ is a global minimizer of $L$ on $\R^n$ and a global minimizer of $\tilde{f}$ on $\K$ because
\begin{equation}
\label{lagrange}
\tilde{f}(\x)\,\geq\,L(\x)\,\geq\,L(0)\,=\,\tilde{f}(0),\qquad\forall\,\x\in\K.\end{equation}
It remains to prove that (\ref{inv-4}) has an optimal solution $\tilde{f}^*$. 
But we have seen that (\ref{inv-4}) is equivalent to (\ref{inv-44}) for which an optimal solution can be found
by solving a linear program and a semidefinite program.
\end{proof}

So in this case where the $g_j$'s are concave (hence $\K$ is convex), one obtains 
the convex programming problem with quadratic cost, whose criterion is the closest to $f$ for
the $\ell_k$-norm.

\section{Global $\epsilon$-optimality}
\label{section-epsilon}

One may be less demanding and ask $\y\in\K$ (or $0\in\K$ after a change of variable)
to be only a global $\epsilon$-minimizer. That is, one searches for a polynomial $\tilde{f}\in\R[\x]_{d_0}$ as close as possible to $f$ and such that
$\tilde{f}(\x)-\tilde{f}(0)\geq -\epsilon$ for all $\x\in\K$ and some $\epsilon>0$ fixed. Then we will see that one may approach
as closely as desired an optimal solution of the ideal inverse problem (\ref{inv-0}).
With $\K\subset [-1,1]^n$ as in (\ref{setk-1}), the analogue of Problem (\ref{inv-1}) reads:
\begin{equation}
\label{invep-1}
\begin{array}{rl}
\rho_{d\epsilon}^k:=\displaystyle\min_{\tilde{f},\sigma_j\in\R[\x] }&\Vert f-\tilde{f}\Vert_k\\
\mbox{s.t.}& \tilde{f}(\x)-\tilde{f}(0)+\epsilon\,=\,\displaystyle\sum_{j=0}^{m+n} \sigma_j(\x)\,g_j(\x),\quad\forall\x\in\R^n\\
%&\tilde{f}(\y)\,\leq\,f(\y),\\
&\tilde{f}\in\R[\x]_{d_0};\:\sigma_j\in\Sigma[\x]_{d-v_j},\quad j=0,1,\ldots,m+n.
\end{array}
\end{equation}
For instance, with $k=1$ (\ref{inv-3-k=1}) now reads
 \begin{equation}
\label{invep-3-k=1}
\begin{array}{rl}
\rho_{d\epsilon}^1:=\displaystyle\min_{\lambda\geq0,\tilde{f},\Z_j}&\displaystyle\sum_{\alpha\in\N^n_{d_0}\setminus\{0\}}\lambda_\alpha\\
\mbox{s.t.}& \lambda_\alpha +\tilde{f}_\alpha\geq f_\alpha,\quad\forall\alpha\in\N^n_{d_0}\setminus\{0\}\\
& \lambda_\alpha -\tilde{f}_\alpha\geq -f_\alpha,\quad\forall\alpha\in\N^n_{d_0}\setminus\{0\}\\
&\la\Z_0,\B_\alpha\ra+\displaystyle\sum_{j=1}^{m+n} \la\Z_j,\C^j_\alpha\ra\,=\,\left\{\begin{array}{l}
\tilde{f}_\alpha,\:\mbox{ if }0<\vert\alpha\vert\leq\,d_0\\
0,\:\mbox{ if $\vert\alpha\vert>d_0$}\\
\epsilon,\:\mbox{ if $\alpha=0$}\end{array}\right.\\
&\Z_j\succeq0,\quad j=0,1,\ldots,m+n.\end{array}
\end{equation}
while its dual reads
\begin{equation}
\label{epdual-k=1}
\left\{\begin{array}{ll}
(\rho^1_{d\epsilon})^*=\displaystyle\max_{\u,\v\geq0,\z}&-\epsilon z_0+\displaystyle
\sum_{\alpha\in\N^n_{d_0}\setminus\{0\}}f_\alpha(u_\alpha-v_\alpha)\:(=L_\z(f(0)-f-\epsilon))\\
\mbox{s.t.}&u_\alpha+v_\alpha\,\leq \,1,\quad\forall\,\alpha\in\N^n_{d_0}\setminus\{0\}\\
&u_\alpha-v_\alpha+z_\alpha \,=\,0,\quad\forall\alpha\in\N^n_{d_0}\setminus\{0\}\\
&\M_d(\z),\,\M_{d-v_j}(g_j\,\z)\,\succeq\,0,\quad j=1,\ldots,m+n.
\end{array}\right.
\end{equation}
Again with no loss of generality and possibly after a change of variable, we assume that $\y=0\in\K$.
\begin{lem}
\label{lem-ep}
Let $\K$ be as in (\ref{setk-1}) and let $\rho^k$ be the optimal value of the ideal inverse problem $\mathcal{P}$ in (\ref{inv-0}).
Then for every fixed $\epsilon>0$ there exists $d_\epsilon\in\N$ such that $\rho^k_{d\epsilon}\leq \rho^k$ for 
all $d\geq d_\epsilon$.
\end{lem}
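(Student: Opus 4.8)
The plan is to exhibit the ideal optimal solution $\tilde{f}^*$ as a feasible point of the $\epsilon$-relaxed problem (\ref{invep-1}) once the degree bound $d$ is large enough; feasibility at the ideal objective value then forces $\rho^k_{d\epsilon}\le\rho^k$. The one genuine idea is that adding the positive constant $\epsilon$ converts the \emph{nonnegativity} constraint of the ideal problem into \emph{strict} positivity, which is precisely the hypothesis needed to invoke Putinar's theorem.

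First I would apply Theorem \ref{th-ideal} (its hypothesis holds since $\K$ has nonempty interior) to obtain an optimal solution $\tilde{f}^*\in\R[\x]_{d_0}$ of the ideal problem (\ref{inv-0}), so that $\Vert\tilde{f}^*-f\Vert_k=\rho^k$ and $\tilde{f}^*(\x)-\tilde{f}^*(0)\geq0$ for all $\x\in\K$. Setting $h:=\tilde{f}^*-\tilde{f}^*(0)+\epsilon\in\R[\x]_{d_0}$, feasibility of $\tilde{f}^*$ gives $h(\x)\geq\epsilon>0$ on $\K$, i.e. $h$ is strictly positive on $\K$.

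Next I would invoke Putinar's Positivstellensatz (Theorem \ref{th-put}). Because $\K$ is written as in (\ref{setk-1}) with the redundant box constraints $1-x_i^2\geq0$, the quadratic module $Q(g)$ is Archimedean, and strict positivity of $h$ yields $h\in Q(g)$. Since $Q(g)=\bigcup_d Q_d(g)$, the polynomial $h$ admits a Putinar certificate of some finite degree, i.e. $h\in Q_{d_\epsilon}(g)$ for some $d_\epsilon\in\N$. The truncated modules are nested, $Q_{d_\epsilon}(g)\subseteq Q_d(g)$ for all $d\geq d_\epsilon$, because $\Sigma[\x]_{d_\epsilon-v_j}\subseteq\Sigma[\x]_{d-v_j}$ (pad each s.o.s. weight with zeros); hence $h\in Q_d(g)$ for every $d\geq d_\epsilon$.

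Finally, for each such $d$ the polynomial $\tilde{f}^*\in\R[\x]_{d_0}$ together with the s.o.s. weights $\sigma_j$ from this certificate satisfies exactly the constraint $\tilde{f}^*(\x)-\tilde{f}^*(0)+\epsilon=\sum_{j=0}^{m+n}\sigma_j(\x)\,g_j(\x)$ of (\ref{invep-1}), so $\tilde{f}^*$ is feasible for the $\epsilon$-relaxed problem at degree bound $d$. Comparing objective values gives $\rho^k_{d\epsilon}\leq\Vert f-\tilde{f}^*\Vert_k=\rho^k$, as claimed. The only routine bookkeeping is confirming that $v_j=\lceil(\deg g_j)/2\rceil$ makes $\deg(\sigma_j g_j)\leq 2d$, so that the certificate indeed certifies membership in $Q_d(g)$; I do not expect any real obstacle, the whole argument being essentially the application of Putinar's theorem to the strictly positive perturbation $h$.
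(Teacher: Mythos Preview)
Your proposal is correct and follows essentially the same route as the paper: take an optimal solution $\tilde{f}^*$ of the ideal problem, observe that $\tilde{f}^*-\tilde{f}^*(0)+\epsilon$ is strictly positive on $\K$, apply Putinar's Positivstellensatz (using the Archimedean property granted by the box constraints in (\ref{setk-1})) to place it in some $Q_{d_\epsilon}(g)$, and conclude that $\tilde{f}^*$ is feasible for (\ref{invep-1}) for all $d\ge d_\epsilon$. Your write-up simply spells out a few details (nesting of the $Q_d(g)$, Archimedeanity) that the paper leaves implicit.
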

\begin{proof}
Let $\tilde{f}^*\in\R[\x]_{d_0}$ be an optimal solution of the ideal inverse problem $\mathcal{P}$ with value $\rho^k=\Vert f-\tilde{f}^*\Vert_k$. 
Observe that the polynomial
$\x\mapsto \tilde{f}^*(\x)-\tilde{f}^*(0)+\epsilon$ is strictly positive on $\K$ and so by Theorem \ref{th-put}
it belongs to $Q(g)$; and so it belongs to $Q_d(g)$ as soon as $d\geq d_\epsilon$ (for some $d_\epsilon\in\N$).
Hence $\tilde{f}^*$ is a feasible solution of (\ref{invep-1}) which implies the desired result $\rho^k\geq\rho^k_{d\epsilon}$ for all $d\geq d_\epsilon$.
\end{proof}
The following analogue of Theorem \ref{th1} 
shows that the optimal value $\rho_{d\epsilon}^k$ of (\ref{invep-1}) 
is still helpful to bound the quantity $f(0)-f^*$
(where $f^*$ is the global optimum of problem $\P$). 

\begin{thm}
\label{th-ep}
Assume that $\K$ in (\ref{setk-1}) has nonempty interior and
let $\x^*\in\K$ be  a global minimizer of $\P$ with optimal value $f^*$.
For every $\epsilon>0$ fixed, let $\tilde{f}_{d\epsilon}\in\R[\x]_{d_0}$ be an optimal solution of (\ref{invep-1}) with optimal value $\rho^k_{d\epsilon}$. Then:

{\rm (a)}  $0\in\K$ is a global $\epsilon$-minimizer of the problem $\tilde{f}^*_{d\epsilon}=\min_\x\{\tilde{f}_{d\epsilon}(\x)\,:\,\x\in\K\}$. In particular,
if $\rho_{d\epsilon}^k=0$ then $\tilde{f}_{d\epsilon}=f$ and $0$ is a global $\epsilon$-minimizer of $\P$.\\

{\rm (b)} If $k=1$ then $f^*\,\leq\,f(0)\,\leq\,f^*+\epsilon+\rho_{d\epsilon}^1$.\\

{\rm (c)} If $k=\infty$ then $f^*\,\leq\,f(0)\,\leq\,f^*+\epsilon+s(d_0)\,\rho_{d\epsilon}^\infty$.
\end{thm}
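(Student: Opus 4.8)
The plan is to mirror the proof of Theorem \ref{th1}, the only structural change being that the Putinar's certificate in (\ref{invep-1}) now represents $\tilde{f}-\tilde{f}(0)+\epsilon$ rather than $\tilde{f}-\tilde{f}(0)$, which injects exactly one extra $\epsilon$ of slack wherever the nonnegativity of $\tilde{f}-\tilde{f}(0)$ on $\K$ was previously invoked. Existence of the optimizer $\tilde{f}_{d\epsilon}$ would be recorded first, via the same no-duality-gap argument as in Lemma \ref{nodualgap}: the dual of (\ref{invep-3-k=1}) is (\ref{epdual-k=1}), and Slater's condition holds for it whenever $\K$ has nonempty interior. I would also note at the outset that, exactly as in Remark \ref{rem-f(0)}, the constant coefficient $\tilde{f}_0$ cancels in the combination $\tilde{f}(\x)-\tilde{f}(0)$ and is absent from the objective, so that at optimality $\tilde{f}_{d\epsilon}(0)=f(0)$. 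This identity is precisely what will convert bounds on $\tilde{f}_{d\epsilon}(0)$ into bounds on $f(0)$.

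For part (a), I would read off from the equality constraint of (\ref{invep-1}) that $\tilde{f}_{d\epsilon}(\x)-\tilde{f}_{d\epsilon}(0)+\epsilon=\sum_{j=0}^{m+n}\sigma_j(\x)\,g_j(\x)\geq0$ for every $\x\in\K$, since each $\sigma_j$ is a sum of squares and each $g_j\geq0$ on $\K$. Hence $\tilde{f}_{d\epsilon}(\x)\geq\tilde{f}_{d\epsilon}(0)-\epsilon$ on $\K$, and taking the infimum over $\x\in\K$ yields $\tilde{f}^*_{d\epsilon}\geq\tilde{f}_{d\epsilon}(0)-\epsilon$, i.e. $0$ is a global $\epsilon$-minimizer. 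The special case is immediate: if $\rho^k_{d\epsilon}=0$ then, because $\Vert\cdot\Vert_k$ is a norm and $\tilde{f}_{d\epsilon}(0)=f(0)$, every coefficient agrees and $\tilde{f}_{d\epsilon}=f$, so $0$ is a global $\epsilon$-minimizer of $\P$ itself.

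For parts (b) and (c), I would use the same telescoping identity at the global minimizer $\x^*$,
\[f^*=f(\x^*)=\big(f(\x^*)-\tilde{f}_{d\epsilon}(\x^*)\big)+\big(\tilde{f}_{d\epsilon}(\x^*)-\tilde{f}_{d\epsilon}(0)\big)+\tilde{f}_{d\epsilon}(0),\]
now bounding the middle bracket below by $-\epsilon$ (the inequality of part (a) evaluated at $\x=\x^*$) rather than by $0$, bounding $\vert f(\x^*)-\tilde{f}_{d\epsilon}(\x^*)\vert$ by $\rho^1_{d\epsilon}\sup_{\alpha}\vert(\x^*)^\alpha\vert$ when $k=1$ (resp. by $\rho^\infty_{d\epsilon}\sum_\alpha\vert(\x^*)^\alpha\vert$ when $k=\infty$) exactly as in Theorem \ref{th1}(b)--(c), and substituting $\tilde{f}_{d\epsilon}(0)=f(0)$. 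Using $\K\subseteq[-1,1]^n$ to obtain $\sup_\alpha\vert(\x^*)^\alpha\vert\leq1$ (resp. $\sum_\alpha\vert(\x^*)^\alpha\vert\leq s(d_0)$) then gives $f(0)\leq f^*+\epsilon+\rho^1_{d\epsilon}$ (resp. $f(0)\leq f^*+\epsilon+s(d_0)\,\rho^\infty_{d\epsilon}$). The trivial lower bound $f^*\leq f(0)$ holds since $0\in\K$.

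I do not expect a genuine obstacle: the argument is a line-by-line transcription of Theorem \ref{th1} with a single ``$+\epsilon$'' carried through the middle bracket of the telescoping sum. The only point deserving care is the harmless book-keeping observation that the constant term still drops out, so that $\tilde{f}_{d\epsilon}(0)=f(0)$ continues to hold for the $\epsilon$-relaxed program; without this identity, parts (b) and (c) would only control $\tilde{f}_{d\epsilon}(0)$ rather than the desired quantity $f(0)$.
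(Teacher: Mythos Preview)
Your proposal is correct and matches the paper's own argument essentially verbatim: the paper omits the proof, stating only that it is a copy of that of Theorem~\ref{th1} with the telescoping decomposition
\[f^*=f(\x^*)=\underbrace{f(\x^*)-\tilde{f}_d(\x^*)}+\underbrace{\tilde{f}_d(\x^*)-\tilde{f}_d(0)}_{\geq -\epsilon}+\tilde{f}_d(0),\]
which is exactly the modification you identify. Your additional remarks on existence (via Lemma~\ref{nodualgap}) and on $\tilde{f}_{d\epsilon}(0)=f(0)$ (via Remark~\ref{rem-f(0)}) are the right supporting observations.
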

The proof is omitted as being a verbatim copy of that of Theorem \ref{th1} and using
\[f^*=f(\x^*)=\underbrace{f(\x^*)-\tilde{f}_d(\x^*)}+\underbrace{\tilde{f}_d(\x^*)-\tilde{f}_d(0)}_{\geq -\epsilon}+\tilde{f}_d(0).\]
Concerning the canonical form associated with the $\ell_1$-norm we also have the following analogue of
Theorem \ref{th1-ell1}.
\begin{thm}
\label{th1ep-ell1}
Assume that $\K$ in (\ref{setk-1}) has a nonempty interior and for every $\epsilon>0$,
let $\tilde{f}_{d\epsilon}\in\R[\x]_{d_0}$ be an optimal solution of (\ref{invep-1}) with optimal value $\rho_{d\epsilon}^1$
for the $\ell_1$-norm. Then $\tilde{f}_{d\epsilon}$ is of the form:
\begin{equation}
\label{th1ep-ell1-1}
\tilde{f}_{d\epsilon}(\x)\,=\,f(\x)+\b'\x+\sum_{i=1}^n\lambda_i^*\,x_i^2,
\end{equation}
for some vector $\b\in\R^n$ and some 
nonnegative vectors $\lambda^*\in\R^n$, optimal solution of the semidefinite program:
\[\begin{array}{rl}
\rho_{d\epsilon}^1:=\displaystyle\min_{\lambda,\gamma,\b}&\displaystyle
\Vert\b\Vert_1+\sum_{i=1}^n\lambda_i\\
\mbox{s.t.}& f-f(0)+\b'\x+\displaystyle\sum_{i=1^n}\lambda_i\,x_i^2+\epsilon\in Q_{d}(g),\quad 
\lambda\,\geq0.
\end{array}\]
(And $\lambda^*=0$ if $d_0=1$.)
\end{thm}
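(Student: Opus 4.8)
The plan is to recognize that Theorem~\ref{th1ep-ell1} is the $\epsilon$-optimality analogue of Theorem~\ref{th1-ell1}, so the proof should run in exact parallel, with the only structural change being the shift of the constant term by $\epsilon$ in the Putinar certificate. First I would start from the dual formulation of the $\ell_1$ problem (\ref{invep-1}), namely (\ref{epdual-k=1}), and observe that exactly as in the passage to (\ref{equiv-1}), the constraints $u_\alpha+v_\alpha\le 1$ together with $u_\alpha-v_\alpha+z_\alpha=0$ can be eliminated in favour of the single box constraint $\vert z_\alpha\vert\le 1$ for all $\alpha\in\N^n_{d_0}\setminus\{0\}$. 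The presence of $\epsilon$ only changes the objective to $L_\z(f(0)-f-\epsilon)=L_\z(f(0)-f)-\epsilon z_0$, which does not interfere with the coefficient-reduction argument, so the dual becomes
\[
\left\{\begin{array}{ll}
\displaystyle\max_{\z}&L_\z(f(0)-f)-\epsilon z_0\\
\mbox{s.t.}&\M_d(\z),\,\M_{d-v_j}(g_j\,\z)\,\succeq\,0,\quad j=1,\ldots,m+n,\\
&\vert z_\alpha\vert \leq 1,\quad\forall\,\alpha\in\N^n_{d_0}\setminus\{0\}.
\end{array}\right.
\]

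Next I would invoke verbatim the key simplification step from the proof of Theorem~\ref{th1-ell1}, i.e.\ the estimates borrowed from Lasserre and Netzer \cite{lass-netzer}: since $\M_d(\z)\succeq0$ one has $\vert z_\alpha\vert\le\max_i\{\max[L_\z(x_i^2),L_\z(x_i^{2d})]\}$ for $\vert\alpha\vert>1$, and the localizing constraints $\M_{d-1}(g_{m+i}\,\z)\succeq0$ coming from the redundant $g_{m+i}=1-x_i^2$ force $L_\z(x_i^{2k-2})\ge L_\z(x_i^{2k})$, hence $\vert z_\alpha\vert\le\max_i L_\z(x_i^2)$. This allows the infinitely many box constraints to be replaced by the finitely many $\pm L_\z(x_i)\le 1$ and $L_\z(x_i^2)\le 1$, $i=1,\ldots,n$. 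Taking the semidefinite dual of the resulting program, with $\b$ collecting the dual variables of the $\pm L_\z(x_i)\le1$ constraints and $\lambda$ those of the $L_\z(x_i^2)\le1$ constraints, produces precisely the stated program, except that the certificate constraint becomes $f-f(0)+\b'\x+\sum_i\lambda_ix_i^2+\epsilon\in Q_d(g)$, the shift by $\epsilon$ being the only trace of the $-\epsilon z_0$ term in the primal.

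The reason the canonical form (\ref{th1ep-ell1-1}) holds is then immediate: the primal variable $\tilde f$ appears in the certificate only through the terms $\b'\x$ (degree one) and $\sum_i\lambda_ix_i^2$ (the squares), so an optimal $\tilde f$ differs from $f$ only in these $2n$ coefficients, giving $\tilde f_{d\epsilon}=f+\b'\x+\sum_i\lambda_i^*x_i^2$. I would also note that no-duality-gap is needed to guarantee that the displayed semidefinite program attains its optimum and shares the value $\rho^1_{d\epsilon}$; this follows by the same Slater-point construction as in Lemma~\ref{nodualgap}, using a measure $\mu$ with nonempty-interior support scaled so that $\kappa\vert\int\x^\alpha d\mu\vert<1$, which makes $\M_d(\z)\succ0$ strictly feasible. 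The main obstacle to watch is confirming that the coefficient-reduction bound from \cite{lass-netzer} is genuinely unaffected by the constant shift $\epsilon$: since $\epsilon$ multiplies only $z_0$ in the objective and enters no inequality among the $z_\alpha$ with $\vert\alpha\vert\ge1$, the reduction argument goes through unchanged, and the degenerate case $d_0=1$ (where no $x_i^2$ term can appear so $\lambda^*=0$) is handled exactly as in Theorem~\ref{th1-ell1}(i).
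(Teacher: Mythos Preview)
Your proposal is correct and follows exactly the route the paper intends: the paper gives no separate proof of Theorem~\ref{th1ep-ell1}, treating it as a verbatim analogue of Theorem~\ref{th1-ell1} with the constant shifted by~$\epsilon$, and your argument reproduces that proof step by step (dual reformulation, the Lasserre--Netzer bound combined with the localizing constraints from $g_{m+i}=1-x_i^2$, reduction to the $3n$ inequalities, and dualizing back), correctly observing that the $-\epsilon z_0$ term affects only the objective and hence only the constant in the Putinar certificate.
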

We end up with the analogue of Proposition \ref{prop-asymptotics} for the asymptotics as $d\to\infty$.
For every $\epsilon>0$, let us call $\mathcal{P}_\epsilon$ the analogue of problem $\mathcal{P} \,(=\mathcal{P}_0)$, i.e.,
\begin{equation}
\label{invep-0}
\mathcal{P}_\epsilon:\quad\rho^k_\epsilon=
\displaystyle\min_{\tilde{f}\in\R[\x]_{d_0}}\:\{\,\Vert \tilde{f}-f\Vert_k\::\:\x\mapsto \tilde{f}(\x)-\tilde{f}(\y)+\epsilon\,\in\,\psd_{d_0}(\K)\:\}.
\end{equation}
\begin{prop}
\label{thep-ideal}
For every $\epsilon>0$ fixed, Problem (\ref{invep-0}) has an optimal solution $\tilde{f}^*_\epsilon\in\R[\x]_{d_0}$,
and $\rho^k_\epsilon=0$ if and only if $\y$ is a global $\epsilon$-minimizer of $\P$. 
\end{prop}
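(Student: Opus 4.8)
The plan is to replay the proof of Theorem~\ref{th-ideal} almost verbatim, since the only structural change in (\ref{invep-0}) is the harmless constant shift $+\epsilon$ inside the membership constraint $\tilde{f}-\tilde{f}(\y)+\epsilon\in\psd_{d_0}(\K)$.

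First I would settle existence of a minimizer. The constant polynomial $\x\mapsto\tilde{f}(\x):=1$ is feasible, because then $\tilde{f}(\x)-\tilde{f}(\y)+\epsilon\equiv\epsilon>0$ lies in $\psd_{d_0}(\K)$; hence $\rho^k_\epsilon$ is finite and bounded below by $0$. Taking a minimizing sequence $(\tilde{f}^j)$ with $\Vert\tilde{f}^j-f\Vert_k\to\rho^k_\epsilon$, all its terms eventually lie in a fixed $\ell_k$-ball $\B_k(f)$, which is compact since $\Vert\cdot\Vert_k$ is a genuine norm on the finite-dimensional space $\R[\x]_{d_0}$. Extracting a convergent subsequence $\tilde{f}^{j_t}\to\tilde{f}^*_\epsilon$, I would fix an arbitrary $\x\in\K$ and pass to the limit in the pointwise inequality $\tilde{f}^{j_t}(\x)-\tilde{f}^{j_t}(\y)+\epsilon\ge0$; continuity of polynomial evaluation gives $\tilde{f}^*_\epsilon(\x)-\tilde{f}^*_\epsilon(\y)+\epsilon\ge0$, and since $\x$ was arbitrary the limit is feasible. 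Continuity of the norm then yields $\Vert\tilde{f}^*_\epsilon-f\Vert_k=\rho^k_\epsilon$, so $\tilde{f}^*_\epsilon$ is optimal.

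For the equivalence I would argue both directions directly. If $\y$ is a global $\epsilon$-minimizer of $\P$, meaning $f(\y)\le f(\x)+\epsilon$ for every $\x\in\K$, then $f(\x)-f(\y)+\epsilon\ge0$ on $\K$, so $\tilde{f}:=f$ is feasible for $\mathcal{P}_\epsilon$ with objective value $0$, forcing $\rho^k_\epsilon=0$. Conversely, if $\rho^k_\epsilon=0$ then, because $\Vert\cdot\Vert_k$ is a norm, the optimal $\tilde{f}^*_\epsilon$ must equal $f$; feasibility of $f$ then reads $f(\x)-f(\y)+\epsilon\ge0$ for all $\x\in\K$, i.e. $f(\y)\le f^*+\epsilon$, which is precisely the statement that $\y$ is a global $\epsilon$-minimizer of $\P$.

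There is really no hard step here: the argument is a word-for-word transcription of Theorem~\ref{th-ideal}. The only point that deserves a moment's care is that the feasible set of $\mathcal{P}_\epsilon$ remains closed and nonempty under the $+\epsilon$ shift, but this is immediate, because adding a positive constant only relaxes the membership requirement; the shift $\tilde{f}\mapsto\tilde{f}-\tilde{f}(\y)+\epsilon$ is continuous and affine and $\psd_{d_0}(\K)$ is a closed cone, so both nonemptiness and stability under limits carry over unchanged.
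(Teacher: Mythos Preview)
Your proposal is correct and matches the paper's approach exactly: the paper gives no separate proof for this proposition, stating only that it is similar to that of Theorem~\ref{th-ideal}, and your write-up is precisely the verbatim transcription of that argument with the $+\epsilon$ shift inserted. There is nothing to add.
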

The proof is similar to that of Theorem \ref{inv-0}.
Next, interestingly, we are able to relate (\ref{invep-1}) and the ideal inverse problem (\ref{inv-0}) as $\epsilon\to 0$.

\begin{prop}
\label{thep-ep=0}
Let $\K$ in (\ref{setk-1}) be with nonempty interior.
Let $\y=0\in\K$ and $\rho^k$ be the optimal value of the ideal inverse problem $\mathcal{P}$ in (\ref{inv-0})
and let $\tilde{f}_{d\epsilon}\in\R[\x]_{d_0}$ (resp. $\tilde{f}^*_\epsilon\in\R[\x]_{d_0}$) be any optimal solution of
(\ref{invep-1}) (resp. (\ref{invep-0})). 

(i) Let $\epsilon_\ell>0$, $\ell\in\N$, be such that $\epsilon_\ell\to0$ as $\ell\to\infty$.
% and let$\tilde{f}_{\epsilon_\ell}$ be any optimal solution of (\ref{invep-0}).
 Then
every accumulation point $\hat{f}\in\R[\x]_{d_0}$ of the sequence $(\tilde{f}^*_{\epsilon_\ell})\subset\R[\x]_{d_0}$, $\ell\in\N$, 
is an optimal solution of the ideal inverse problem (\ref{inv-0}).

(ii) If for every $\epsilon_\ell>0$,  $d_\ell\in\N$ is sufficiently large, every accumulation point of the sequence
$(\tilde{f}_{d_\ell\epsilon_\ell})\subset\R[\x]_{d_0}$ is an optimal solution of the ideal inverse problem (\ref{inv-0}).
\end{prop}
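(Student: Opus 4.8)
The plan is to treat both parts with one scheme, since (\ref{invep-0}) and (\ref{invep-1}) are relaxations of the ideal problem (\ref{inv-0}) whose constraints tighten back to that of (\ref{inv-0}) as $\epsilon\to0$. The two ingredients I would isolate are: (1) a uniform upper bound showing the relaxed optimal values never exceed $\rho^k$, which confines the candidate criteria to a fixed $\Vert\cdot\Vert_k$-ball and thereby supplies accumulation points; and (2) a limiting-feasibility argument recovering the hard constraint $\hat f-\hat f(0)\in\psd_{d_0}(\K)$. Once both hold, continuity of $\Vert\cdot\Vert_k$ squeezes $\Vert\hat f-f\Vert_k$ between $\rho^k$ (from feasibility of $\hat f$ in (\ref{inv-0})) and $\rho^k$ (from the upper bound), forcing optimality.

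For part (i) I would first note that any optimal $\tilde f^*$ of (\ref{inv-0}) is feasible for $\mathcal P_\epsilon$, so $\rho^k_\epsilon\leq\rho^k$ for every $\epsilon>0$; hence $\Vert\tilde f^*_{\epsilon_\ell}-f\Vert_k=\rho^k_{\epsilon_\ell}\leq\rho^k$ and the sequence lives in a fixed ball of the finite-dimensional space $\R[\x]_{d_0}$, so accumulation points exist. Passing to a subsequence with $\tilde f^*_{\epsilon_{\ell_i}}\to\hat f$, I would use the feasibility inequality $\tilde f^*_{\epsilon_{\ell_i}}(\x)-\tilde f^*_{\epsilon_{\ell_i}}(0)+\epsilon_{\ell_i}\geq0$ for each fixed $\x\in\K$ and let $i\to\infty$ (so $\epsilon_{\ell_i}\to0$) to obtain $\hat f(\x)-\hat f(0)\geq0$, i.e. $\hat f-\hat f(0)\in\psd_{d_0}(\K)$. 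Feasibility of $\hat f$ in (\ref{inv-0}) gives $\Vert\hat f-f\Vert_k\geq\rho^k$, while continuity gives $\Vert\hat f-f\Vert_k=\lim_i\rho^k_{\epsilon_{\ell_i}}\leq\rho^k$, so $\hat f$ is optimal for (\ref{inv-0}).

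For part (ii) the scheme is identical; only the source of the upper bound changes. I would invoke Lemma \ref{lem-ep} to pick $d_\ell\geq d_{\epsilon_\ell}$, which is exactly what ``$d_\ell$ sufficiently large'' means and guarantees $\rho^k_{d_\ell\epsilon_\ell}\leq\rho^k$. For limiting feasibility I would observe that the constraint of (\ref{invep-1}) places $\tilde f_{d_\ell\epsilon_\ell}(\x)-\tilde f_{d_\ell\epsilon_\ell}(0)+\epsilon_\ell$ in $Q_{d_\ell}(g)$; since this polynomial has degree $\leq d_0$ and every element of $Q_{d_\ell}(g)$ is nonnegative on $\K$, it lies in $\psd_{d_0}(\K)$, yielding again the pointwise inequality $\geq0$ on $\K$ that I can pass to the limit as in (i). The remainder is verbatim.

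I expect the only genuine obstacle to be securing the two upper bounds of ingredient (1). For (i) it is immediate from the inclusion of feasible sets; for (ii) it is precisely Lemma \ref{lem-ep}, whose content---placing the strictly positive polynomial $\tilde f^*-\tilde f^*(0)+\epsilon$ into $Q_d(g)$ for large $d$ via Putinar's Positivstellensatz---is what makes the practical scheme asymptotically exact. Everything else is elementary compactness and continuity; I would stress that nonemptiness of the interior is used only to guarantee that the relaxed optima $\tilde f^*_\epsilon$ and $\tilde f_{d\epsilon}$ exist (via Proposition \ref{thep-ideal} and Lemma \ref{nodualgap}), not in the limiting argument itself.
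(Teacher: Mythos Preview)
Your proposal is correct and follows essentially the same route as the paper: bound $\rho^k_{\epsilon_\ell}\le\rho^k$ (respectively $\rho^k_{d_\ell\epsilon_\ell}\le\rho^k$ via Lemma~\ref{lem-ep}) to confine the sequence to a compact ball, pass the pointwise inequality $\tilde f(\x)-\tilde f(0)+\epsilon\ge0$ on $\K$ to the limit to recover feasibility of $\hat f$ in (\ref{inv-0}), and conclude by continuity of the norm. Your added remarks on where the nonempty-interior hypothesis enters and why $Q_{d_\ell}(g)\subset\psd_{d_0}(\K)$ suffices for (ii) are accurate but not needed beyond what the paper uses.
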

\begin{proof}
(i) As $\Vert \tilde{f}^*_{\epsilon_\ell}-f\Vert_k\leq \rho^k$ for all $\ell$, the sequence
$(\tilde{f}^*_{\epsilon_\ell})\subset\R[\x]_{d_0}$, $\ell\in\N$, has accumulation points. So consider an arbitrary 
converging subsequence (still denoted $(\tilde{f}^*_{\epsilon_\ell})$ for notational convenience) $\tilde{f}^*_{\epsilon_\ell}\to\hat{f}\in\R[\x]_{d_0}$, as $\ell\to\infty$.

Observe that $\tilde{f}^*_{\epsilon_\ell}(\x)-\tilde{f}^*_{\epsilon_\ell}(0)+\epsilon_\ell\geq 0$ for all $\x\in\K$ and all $\ell\in\N$.
With $\x\in\K$ fixed, arbitrary, letting $\ell\to\infty$ yields $\hat{f}(\x)-\hat{f}(0)\geq 0$, and so 
$\x\mapsto \hat{f}(\x)-\hat{f}(0)\in\psd_{d_0}(\K)$. Moreover, 
$\rho^k_{\epsilon_\ell}\leq\rho^k$ for all $\ell$ yields $\Vert f-\hat{f}\Vert_k\leq \rho^k$, which in turn implies 
that $\hat{f}$ is an optimal solution of the inverse problem (\ref{inv-0}). 
As the accumulation point $\hat{f}$ was arbitrary the result follows.
The proof of (ii) is similar 
if one recalls that by Lemma \ref{lem-ep}, $\rho^k_{d\epsilon_\ell}=\Vert f-\tilde{f}_{d\epsilon_\ell}\Vert_k\leq\rho^k$ for all $\ell$ and 
all $d$ sufficiently large, say $d\geq d_{\ell}$.
\end{proof}

Hence, by asking $\y\,(=0)\in\K$ to be only a global $\epsilon$-minimizer, one may obtain 
a polynomial $\tilde{f}_{d\epsilon}\in\R[\x]_{d_0}$ as close as desired to an optimal solution of the ideal inverse problem
provided that $\epsilon$ is sufficiently small and $d$ is sufficiently large.

\subsection{Illustrative examples and discussion}
We here provide some simple illustrative examples and
show that the representation of the set $\K$ may be important for getting a Putinar certificate 
faster.
\begin{ex}
\label{ex1}
{\rm Let $n=2$ and consider the optimization problem
$\P:\:f^*\,=\,\min_\x\:\{f(\x)\,:\,\x\in\K\}$
with $\x\mapsto f(\x)=x_1+x_2$, and
\[\K\,=\,\{\x\in\R^2\::\: x_1x_2\geq1;\:  1/2\leq \x\leq 2\:\}.\]
The polynomial $f$ is convex and the set $\K$ is convex as well, but the polynomials that define $\K$ are not all concave.
That is, $\P$ is a convex optimization problem, but not a convex programming problem. 
%See e.g. \cite{lass-letters} for a discussion.
The point $\y=(1,1)\in\K$ is a global minimizer and the KKT conditions at $\y$ are satisfied with
$\lambda=(1,0,0)\in\R^3$, i.e.,
\[\nabla f(\x)-\lambda_1\nabla g_1(\x)\,=\,0\mbox{ with }\x=(1,1)\mbox{ and }\lambda_1 =1.\]
However, the Lagrangian
\[\x\mapsto L(\x)\,:=\,f(\x)-f^*-\lambda_1\,g_1(\x)\,=\,x_1+x_2-1-x_1x_2,\]
is not convex and $(1,1)$ is not a global minimizer of $L$ on $\R^2$.
This example just illustrates the fact that even in the convex case 
where the $g_j$'s are not concave, the KKT conditions do not provide a {\it certificate} of global optimality, contrary to ``convex programming" where since $L$ is now convex, obviously using $L(\x)\geq L(\y)=0$ (because $\nabla L(\y)=0$),
\[f(\x)-f^*\geq\,L(\x)\,\geq\,L(\y)\,=\,0,\]
whenever $\x\in\K$, and so $f(\x)\geq f^*$ for all $\x\in\K$, the desired certificate of global optimality. 

Next, if we now use the test of inverse optimality with $d=1$, 
one searches for a polynomial $\tilde{f}_d$ of degree at most $d_0=1$, and such that
\[\tilde{f}_d(\x)-\tilde{f}_d(1,1)\,=\,\sigma_0(\x)+\sigma_1(\x)(x_1x_2-1)+\sum_{i=1}^2\psi_i(\x) (2-x_i)
+\phi_i(\x) (x_i-1/2),\]
for some s.o.s. polynomials $\sigma_1,\psi_i\,\phi_i\in\Sigma[\x]_0$ and some s.o.s. polynomial $\sigma_0\in\Sigma[\x]_1$.
But then necessarily $\sigma_1=0$ and $\psi_i,\phi_i$ are constant, which in turn implies that
$\sigma_0$ is a constant polynomial. A straightforward calculation shows that $\tilde{f}_1(\x)=0$ for all $\x$, and so 
$\rho^1_1=2$. And indeed this is confirmed when solving\footnote{To solve
(\ref{dual-k=1}) we have used the GloptiPoly software
of Henrion et al. \cite{gloptipoly}, and dedicated to solving the Generalized Problem of Moments whose 
problem (\ref{dual-k=1}) is only a special case.} (\ref{dual-k=1}) with $d=1$. Solving again (\ref{dual-k=1}) with now $d=2$ yields
$\rho^1_2=2$ (no improvement) and with $d=3$ we obtain the desired result $\rho^1_3=0$.

On the other hand, if now $\K$ has the representation:
\[\{\x\::\:x_1x_2-1\,\geq\,0;\quad (x_i-1/2)(2-x_i)\geq\,0;\quad i=1,2\:\},\]
then the situation differs because in fact 
\[x_1+x_2-2\,=\,\frac{1}{5}+\frac{2}{5}(x_1-x_2)^2+\frac{4}{5}(x_1x_2-1)+\frac{2}{5}\sum_{i=1}^2(x_i-1/2)(2-x_i),\]
i.e., $f-f^*$ has a Putinar's certificate with degree bound $d=1$. Hence the test of inverse optimality yields $\rho^1_1=0$ with $\tilde{f}_1=f$.
}\end{ex}
The above example illustrates that the representation of $\K$ may be important.
\begin{ex}
\label{newex1}
{\rm Again consider  Example \ref{ex1} but now with
$\y=(1.1,1/1.1)\in\K$, which is not a global optimum of $f$ on $\K$ any more. By solving
(\ref{dual-k=1}) with $d=1$ we still find $\rho^1_1=2$ (i.e., $\tilde{f}_1=0$), and with $d=2$ we find $\rho_2\approx 0.1734$ and
$\tilde{f}_2(\x)\approx 0.8266\,x_1+x_2$.
And indeed by solving (using GloptiPoly) the new optimization problem with criterion $\tilde{f}_2$ 
we find the global minimizer $(1.1,0.9091)\approx\y$. With $d=3$ we obtains the same value $\rho^1_3=0.1734$, suggesting (but with no guarantee)
that $\tilde{f}_2$ is already an optimal solution of the ideal inverse problem.
}\end{ex}
\begin{ex}
\label{newex2}
{\rm Consider  now the disconnected set $\K:=\{\x\,:\, x_1x_2\geq1;\:x_1^2+x_2^2 <= 3\}$
and the non convex criterion $\x\mapsto f(\x):=-x_1-x_2^2$ for which 
$\x^*=(-0.618,-1/0.618)\in\partial\K$ is the unique global minimizer.
Let $\y:=(-0.63,-1/0.63)\in\partial\K$ for which the constraint $x_1x_2\geq1$ is active.
At steps $d=2$ and $d=3$ one finds that
 $\tilde{f_d}=0$ and $\rho^1_d=\Vert f\Vert_1$. That is, $\y$ is a global minimizer of the trivial
 criterion$\tilde{f}(\x)=0$ for all $\x$, and cannot be a global minimizer
 of some non trivial polynomial criterion. 
 
 Now let  $\y=(-0.63,-\sqrt{3-0.63^2})$ so that the constraint $x_1^2+x_2^2<=3$ is active.
 With obtain $\rho^1_1=\Vert f\Vert_1$ and $\tilde{f}_1=0$. With $d=2$
 we obtain $\tilde{f}_2=1.26\,x_1-x_2^2$. With $d=3$ we obtain the same result, suggesting (but with no guarantee) that
 $\tilde{f}_2$ is already an optimal solution of the ideal inverse optimization problem.
 }\end{ex}

\begin{ex}
{\rm Consider the MAXCUT problem $\max \{\x'\A\x\,:\,\x_i^2=1,i=1,\ldots,n\}$ where 
$\A=\A'\in\R^{n\times n}$ and $\A_{ij}=1/2$ for all $i\neq j$. For $n$ odd,
an optimal solution is $\y=(y_j)$ with $y_j=1$, $j=1,\ldots\lceil n/2\rceil$, and $y_j=-1$ otherwise.
However, the first semidefinite relaxation 
\[\displaystyle\max\:\{\lambda \::\: \x'\A\x-\lambda=\sigma+\sum_{j=1}^n\gamma_i(x_i^2-1)
;\:\sigma\in\Sigma[\x]_1;\:\lambda,\gamma\in\R\}\]
provides the lower bound $-n/2$ (with famous Goemans-Williamson ratio guarantee).
So $\y$ cannot be obtained from the first semidefinite relaxation even though it is
an optimal solution. The inverse optimization problem reads:
Find the quadratic form $\x\mapsto\x'\tilde{\A}\x$ such that 
$\displaystyle\x'\tilde{\A}\x-\y'\tilde{\A}\y\,=\,\sigma+\sum_{j=1}^n\gamma_i(x_i^2-1)$,
for some $\sigma\in\Sigma[\x]_1,\:\lambda,\gamma\in\R$, and 
which minimizes the $\ell_1$-norm $\Vert\A-\tilde{\A}\Vert_1$. This is an inverse optimization problem with structural constraints 
as described in Section \ref{structural} (since we search for a quadratic form
and not an arbitrary quadratic polynomial $\tilde{f}_2$). Hence, solving (\ref{inv-33-k=1}) for $n=5$ with $\y$ as above, we find that
\[\tilde{\A}\,=\,\frac{1}{2}\left[\begin{array}{ccccc} 0&2/3&2/3&1&1\\2/3&0&2/3&1&1\\
2/3&2/3&0&1&1\\
1&1&1&0&1\\
1&1&1&1&0\end{array}\right],\]
that is, only the entries $(i,j)\in\{(1,2),(1,3),(2,3)\}$ are modified from $1/2$ to $1/3$. 
}\end{ex}

\section*{Conclusion}
We have presented a paradigm for inverse polynomial optimization.
Crucial is Putinar's Positivstellensatz which provides us
with the desired certificate of global optimality for a given
feasible point $\y\in\K$ and a candidate criterion $\tilde{f}$. In addition, to some extent,
the size of the certificate can be adapted to the computational capabilities available. Finally, and remarkably, when using the
$\ell_1$-norm the resulting inverse optimal criterion $\tilde{f}$ has a simple and explicit
{\it canonical} form. We hope that the concept of inverse optimization will receive more attention
from the optimization community as it could even provide an alternative stopping criterion
at the current iterate $\y\in\K$ of any local optimization algorithm for solving the original problem $\P$.

\end{document}